\newtheorem{Assumption}{Assumption}
\begin{document}
	\ArticleType{RESEARCH PAPER}
	\Year{2021}
	\Month{}
	\Vol{}
	\No{}
	\DOI{}
	\ArtNo{}
	\ReceiveDate{}
	\ReviseDate{}
	\AcceptDate{}
	\OnlineDate{}
	
	\title{Optimal intrinsic formation using exogenous systems}{Optimal intrinsic formation using exogenous systems}
	
	\author[1,2]{Yueyue XU}{}
        \author[2]{Panpan ZHOU}{}
        \author[1]{Lin WANG}{{wanglin@sjtu.edu.cn}}
        \author[2]{Xiaoming HU}{}
	\AuthorMark{Xu Y}
	
		\AuthorCitation{Xu Y, Zhou P, Wang L, et al}
	
	

	\address[1]{Department of Automation, Shanghai Jiao Tong University, Shanghai {\rm 200240}, China}
        \address[2]{Department of Mathematics, KTH Royal Institute of Technology, Stockholm {\rm 10044}, Sweden}
	
	\abstract{This paper investigates the intrinsic formation problem of a multi-agent system using an exogenous system. The problem is formulated as an intrinsic infinite time-horizon linear quadratic optimal control problem, namely, no formation error information is incorporated in the performance index.
Convergence to the formation is achieved by utilizing an exogenous system, thus expanding the steady-state formation space of the system. 
For the forward problem, we provide the existence condition for a nonzero steady state and characterize the steady-state space. For the inverse problem, we design both the input matrix and the exogenous system so that the desired formation can be achieved. Finally, numerical simulations are provided to illustrate the effectiveness of the proposed results. }
	\keywords{multi-agent system, intrinsic formation control, exogenous system, quadratic optimal control, linear system}
	
	\maketitle


\section{Introduction}
The formation control problem has been an interesting and attractive topic in the field of multi-agent coordination control for its great applications in, for instance, surveillance or exploration activity, transportation, and space missions  \cite{zhang2025linear,li2022differential}. The formation control problem aims to design cooperative control strategies to steer a group of agents to form a desired geometric pattern. 

In most of the existing literature on formation control, multi-agent systems range from single or double integrators \cite{zhao2019bearing,vu2023distance}, to more general linear systems \cite{dong2016time}, and even to nonlinear systems such as unicycles \cite{roza2019smooth} and Euler-Lagrange systems \cite{zhou2020formation}. Among the formation control approaches, displacement-based method is one of the fundamental methods \cite{oh2015survey}. As far as we know, in the majority of research results that utilize the displacement-based method, the desired formation configuration is included in the designed control strategy, no matter if they formulate the problem as an optimization problem \cite{wang2012integrated,chang2022optimal,liu2023formation} or not \cite{dong2016time,coogan2012scaling}. The explicit desired formation is easy to be stolen, which causes security problem of the formation control. In recent years, some researchers have begun studying the formation control problem using an intrinsic approach \cite{song2017intrinsic,zhang2018intrinsic,zhang2020intrinsic,li2022differential,li2024Pattern}, which does not require including any information about the desired formation in the controller or the performance index. Instead, the resulting formation pattern is determined by the specified inter-agent connectivity topology. 
However, the existing literature has significant limitations in terms of formation diversity. To the best of our knowledge, the intrinsic approach was firstly used in \cite{song2017intrinsic} to study the formation control, where agents converge to a great circle.
Later, Zhang et al. \cite{zhang2018intrinsic} used gain functions to achieve intrinsic formation control for reduced attitude without any prior information of the desired formation, but it was limited to achieving a tetrahedron formation. After that, Zhang et al. \cite{zhang2020intrinsic} extended this work to achieve all regular polyhedra formations. In \cite{li2022differential}, a differential game approach is used to achieve intrinsic control. But due to the symmetry requirements of the objective function for the game, the formations were restricted to Platonic solids. It is also worth noting that the above work used static feedback control, while we study the dynamic feedback control in this paper by adding an exogeneous system. Even though full state information is required, more formation configurations can be achieved.

One of the contributions of our model is to expand the range of formations achievable through intrinsic control, enabling essentially arbitrary formations in any spatial dimension—excluding only certain degenerate cases (e.g., those associated with eigenvectors of matrix $A$ having eigenvalues with positive real parts or zero eigenvalues with nontrivial Jordan blocks).
We formulate the formation control problem as an infinite time-horizon linear quadratic optimal control problem. It is widely understood that, if the performance index does not incorporate the difference between the agent's current position and the target position, the optimal trajectory derived from a standard linear quadratic optimal control problem can at most converge to the null space of the system matrix. However, in many cases, the desired formation does not lie within that space. Thus we expand the steady-state formation space of the system by incorporating an exogenous system. Although Li et al. \cite{li2024Pattern} also studied the formation using an intrinsic optimal control approach, it only added a single integrator to the system and the control strategies can only achieve two-color patterns in a two-dimensional space. Plus, we can control only a subset of the players, relying on their inherent interactions to guide the entire system to the desired formation.

In this paper, we firstly address the forward problem, which includes determining the existence condition for a nonzero steady state and characterizing the steady-state space. We then investigate the inverse problem which aims to design the system to ensure convergence to the desired formation. To achieve this, we design the input matrix so that the desired formation lies within the maximum steady-state space while minimizing the number of control. Then, we design the exogenous system and performance index to ensure convergence to the specific desired formation. Finally, we specify how to set the initial state of the exogenous system so that any desired formation scaling can be achieved.

The designed exogenous system can protect the formation information in three ways. 
Firstly, the desired formation pattern is difficult to be stolen since it does not exist in the controller and the performance index explicitly \cite{yang2021attacks}.
Secondly, the same formation pattern can be achieved with different controls corresponding to different exogenous systems, which helps enhancing the stealth of the desired formation.
Thirdly, instead of minimizing the input of the multi-agent system, we minimize the input of the exogenous system.
Thus the agents do not travel from the start position to the desired position via the shortest route; instead, they take alternative paths to confuse any adversaries that may attempt to seize the desired formation.

This paper is organized as follows. In
Section 2, we first formulate the formation problem under the framework of intrinsic quadratic optimal control. 
Then we study the existence condition of a  nonzero steady state and characterize the steady-state space in Section 3. Section 4 presents the design of the input matrix, exogenous system, performance index and initial state required to achieve the desired formation. In Section 5, simulations are given to illustrate the theoretical results. A summary of notations is provided in Table 1.
\begin{table}
\begin{center}
\caption{Notations}
\vskip0.1cm
\label{tab1}
\begin{tabular}{c p{5cm}}
\hline
Notations & Definitions \\
\hline
$\operatorname{Im}(A)$ & Image space of matrix $A$ \\
$\operatorname{Ker}(A)$ & Kernel space of matrix $A$ \\
 $\operatorname{rank}(A)$ & The rank of matrix $A$ \\
$I_{n}$ & $n\times n$ identity matrix  \\
$\operatorname{diag}(X_1,...,X_n)$ & Block diagonal matrix with blocks $X_1,...,X_n$\\ 
$\otimes$  & Kronecker product\\
\hline
\end{tabular}
\end{center}
\end{table}
\section{Problem formulation}
Consider a multi-agent system with $N$ agents, $x_i(t) \in \mathbb{R}^d$ is the state of the agent $i$, where $d$ is the dimension of space. We can arrange the states of $N$ agents into a column vector $x(t) \in \mathbb{R}^{n}$ where $n=N\times d$. Then we write the dynamics of multi-agent system in the compact form
\begin{equation}\label{eq1}
\dot{x}(t)= A x(t)+B u(t),
\end{equation}
where $u(t)\in \mathbb{R}^m$ is the control for the multi-agent system and the form of matrix $A$ depends on the interaction patterns among the agents. For example, in Laplacian networked system, the dynamics of the agent $i$ are described by $\dot{x}_{i}(t)=\sum_{j \in \mathcal{N}_{i}}\left(x_{j}(t)-x_{i}(t)\right)+B_iu(t),$
where $\mathcal{N}_{i}$ is the set of agents that interact with the agent $i$, $B_i \in \mathbb{R}^{d \times m}$ is the input matrix. Then we have $A=-L\otimes I_{d},~ B=\left[B_{1}^{\top},~ B_{2}^{\top},~ \cdots ,~B_{N}^{\top}\right]^{\top}$, where $L$ is the Laplacian matrix. 

As mentioned above our goal is to solve the formation problem in an intrinsic way without incorporating any desired formation information in the performance index.  Then a standard way to formulate the problem under the framework of quadratic optimal control would be as follows:
\begin{equation}\label{origin_op}
\begin{aligned}
\min_u \quad & J = \frac{1}{2} \int_0^{\infty} \left(x^\top Q x+ \|u\|^{2}  \right) dt, \\
s.t. \quad & \dot{x} = Ax + Bu,
\end{aligned}
\end{equation}
where the positive semi-definite matrix $Q$ is to be designed. 

However, directly minimizing the index $J$ will drive the state at most towards the null space of the matrix $A$, which may not include the formations of interest. In paper \cite{li2024Pattern}, an integrator is added to the system to modify the control signal, and by this way some stripe patterns are achieved. However, the patterns achieved there are limited to two-color patterns on a two-dimensional plane. In this paper, we enable more general formations in spaces of any dimension by introducing a general exogenous system:
\begin{equation}\label{eq4}
\begin{array}{cl}
\dot{w}=&F_{1} x+F_{2} w+G v,\\
u=&H w+K v, 
\end{array}
\end{equation}
where  $w \in \mathbb{R}^{k}$ is the state of the exogenous system, $F_{1} \in \mathbb{R}^{k \times n},F_{2} \in \mathbb{R}^{k \times  k}, G\in \mathbb{R}^{k \times  p},H \in \mathbb{R}^{m \times k}, K \in \mathbb{R}^{m \times  p}$ are matrices of the exogenous system to be designed, $v \in \mathbb{R}^{p} $ is the new control that substitutes $u$ in the performance index of \eqref{origin_op} and the performance index becomes
\begin{equation}\label{eq5}
\bar{J}=\frac{1}{2} \int_{0}^{\infty}\left(x^{\top} Q x+\|v\|^{2}\right) d t. 
\end{equation}
The above optimization problem \eqref{eq1}, \eqref{eq4}-\eqref{eq5} can be restated as the following linear-quadratic optimal control problem,
\begin{subequations}\label{eq_optimal}
\begin{align}\label{perfor_index}
\min _{v}~ & \bar{J}=\frac{1}{2} \int_{0}^{\infty}\left(\bar{x}^{\top} \bar{Q} \bar{x}+\|v\|^{2}\right) d t,\\
\text { s.t. } & \label{combined_sys} \dot{\bar{x}}=\bar{A}\bar{x}+\bar{B} v,
\end{align}
\end{subequations}
where 
\begin{equation}\label{matrix}
\begin{array}{l}
\bar{x}=\left[\begin{array}{l}
x \\
w
\end{array}\right], \bar{A}=\left[\begin{array}{cc}
A & B H \\
F_{1} & F_{2}
\end{array}\right], \\
\bar{B}=\left[\begin{array}{c}
BK \\
G
\end{array}\right], \bar{Q}=\left[\begin{array}{cc}
Q & 0_{n \times k} \\
0_{k \times n} & 0_{k \times k}
\end{array}\right].
\end{array}
\end{equation}
$Q$ is positive semi-definite and can be rewritten as $Q=C^\top C$,
where $C \in \mathbb{R}^{q \times n}.$ Define $\bar{C}=[C \quad 0_{q\times k}],$ then $\bar{Q}=\bar{C}^\top\bar{C}$.
If the optimal control exists, then it can be expressed as
\begin{equation}\label{eq9}
v^{*}=-\bar{B}^{\top} P \bar{x}, 
\end{equation}
where $P \in \mathbb{R}^{(n+k) \times (n+k)}$ is the smallest real symmetric positive semi-definite solution to the algebraic Riccati equation
\begin{equation}\label{Rieq}
\bar{A}^{\top} P+P \bar{A}-P \bar{B}\bar{B}^{\top} P+\bar{Q}=0.
\end{equation}
We say that $P$ is the smallest if for any other real symmetric positive semi-definite solution $P_0$ of \eqref{Rieq}, the difference $P_0-P$ is positive semi-definite.
The closed-loop system under the control strategy \eqref{eq9} is as follows:
\begin{equation}\label{closeloop}
\dot{\bar{x}} =(\bar{A}-\bar{B}\bar{B}^{\top} P)\bar{x}\triangleq \bar{A}_{cl} \bar{x}.
\end{equation}

\section{Forward problem: steady state description}
In this section, we will address the forward problem, namely describing the steady-state formation as solution to the optimal control problem \eqref{eq_optimal}. Note that since the formations we are interested are in general non-consensus formations, $(\bar{C}, \bar{A})$ can not be detectable, which differs from the standard case.
\subsection{Existence condition of a nonzero steady state}
In order to give the existence condition of a nonzero steady state, we first cite the following lemma that provides the existence condition of an optimal control for \eqref{eq_optimal}.
\begin{lemma}\label{the1}(cf. \cite{kuvcera1973review,Trentelman2001})
Suppose $(\bar{A}, \bar{B})$ is stabilizable and $\bar{Q}$ is positive semi-definite, then there exists a smallest real symmetric positive semi-definite solution $P$ for the algebraic Riccati equation \eqref{Rieq}. Furthermore,
\begin{equation}
v(t) = -\bar{B}^\top P \bar{x}(t)
\end{equation}
is the optimal solution of the linear quadratic optimal control problem \eqref{eq_optimal}. The closed-loop matrix $\bar{A}_{cl}$ is stable if and only if $(\bar{C},\bar{A})$ is detectable.
\end{lemma}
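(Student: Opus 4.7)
The plan is to decompose the lemma into three sub-claims, each addressed by a classical argument: (i) existence of a smallest positive semi-definite solution $P$ to the ARE, (ii) optimality of $v^* = -\bar{B}^\top P \bar{x}$, and (iii) the equivalence $\bar{A}_{cl}$ stable $\iff (\bar{C},\bar{A})$ detectable. Throughout I would work with the closed-loop identity
\begin{equation*}
\bar{A}_{cl}^\top P + P \bar{A}_{cl} = -\bar{Q} - P\bar{B}\bar{B}^\top P,
\end{equation*}
obtained by rewriting the ARE, which is the Lyapunov-type identity that couples $P$ to the closed-loop dynamics.

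For (i), I would introduce the finite-horizon value-matrix $P_T$ associated with the cost $\frac{1}{2}\int_0^T(\bar{x}^\top\bar{Q}\bar{x} + \|v\|^2)\,dt$ with terminal cost zero. Standard dynamic programming gives a differential Riccati equation whose solution $P_T(0)$ is monotonically nondecreasing in $T$ and symmetric positive semi-definite. Stabilizability of $(\bar{A},\bar{B})$ furnishes a stabilizing feedback $v = -M\bar{x}$, and the cost under this fixed feedback is finite and uniform in $T$, which uniformly bounds $P_T(0)$ from above. Monotonicity plus boundedness yields a limit $P \succeq 0$, and passing to the limit in the differential Riccati equation gives the algebraic Riccati equation. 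Minimality of $P$ in the PSD order among all PSD solutions follows from the same monotonicity: any other PSD solution dominates every $P_T(0)$ and therefore dominates the limit.

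For (ii), I would use the completion-of-squares identity: for any admissible $v$ producing a trajectory $\bar{x}$,
\begin{equation*}
\bar{x}^\top \bar{Q}\bar{x} + \|v\|^2 = -\frac{d}{dt}\bigl(\bar{x}^\top P \bar{x}\bigr) + \|v + \bar{B}^\top P \bar{x}\|^2,
\end{equation*}
which follows by expanding the derivative and invoking the ARE. Integrating from $0$ to $T$ and letting $T\to\infty$, the boundary term tends to $\bar{x}(0)^\top P \bar{x}(0)$ under any cost-finite policy (since $\bar{x}^\top P \bar{x}\ge 0$ is bounded and cannot oscillate indefinitely while keeping the cost integrable), and the integral is minimized pointwise by $v^* = -\bar{B}^\top P \bar{x}$, giving both optimality and the minimum cost $\frac{1}{2}\bar{x}(0)^\top P \bar{x}(0)$.

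For (iii), the direction ``detectable $\Rightarrow$ stable'' is the key spectral argument. Suppose $\bar{A}_{cl}\xi = \lambda \xi$ with $\mathrm{Re}(\lambda)\ge 0$. Multiplying the closed-loop Lyapunov identity on the left by $\xi^*$ and on the right by $\xi$ yields $2\mathrm{Re}(\lambda)\,\xi^* P\xi = -\|\bar{C}\xi\|^2 - \|\bar{B}^\top P\xi\|^2$, so each term vanishes; in particular $\bar{C}\xi = 0$ and $\bar{B}^\top P\xi = 0$, and then $\bar{A}\xi = \bar{A}_{cl}\xi = \lambda \xi$. The PBH detectability test applied to $(\bar{C},\bar{A})$ forces $\mathrm{Re}(\lambda) < 0$, a contradiction. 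For the converse, if $(\bar{C},\bar{A})$ is not detectable, there exists $\xi\neq 0$ with $\bar{A}\xi = \lambda \xi$, $\bar{C}\xi = 0$, $\mathrm{Re}(\lambda)\ge 0$. Because the unobservable-unstable subspace carries no running cost, one can construct a PSD solution of the ARE that is singular along that mode (indeed the minimal $P$ must annihilate $\xi$), so $P\xi = 0$ and thus $\bar{A}_{cl}\xi = \bar{A}\xi = \lambda \xi$ shows $\bar{A}_{cl}$ has an eigenvalue with nonnegative real part.

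The main obstacle I anticipate is the ``stability $\Rightarrow$ detectability'' direction, since one must argue that the \emph{smallest} PSD solution $P$ kills undetectable modes; the cleanest route is probably to compare with the PSD solution built on the observable-stabilizable subsystem obtained from a Kalman observability decomposition, showing that any undetectable unstable mode survives in $\bar{A}_{cl}$ regardless of which PSD $P$ is chosen. The other two parts reduce to textbook manipulations once the ARE and its closed-loop Lyapunov form are in hand.
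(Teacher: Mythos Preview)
The paper does not prove this lemma; it is stated with a citation to Ku\v{c}era (1973) and Trentelman--Stoorvogel--Hautus (2001) and used as a known result without argument. Your sketch is a correct reconstruction of the classical proof found in those references: the monotone finite-horizon limit for the minimal $P$, completion of squares for optimality, and the PBH spectral argument for the stability--detectability equivalence are precisely the standard route.

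Two minor technical remarks. In part (ii), the assertion that the boundary term $\bar{x}(T)^\top P\bar{x}(T)$ vanishes along every cost-finite trajectory is not automatic in the absence of detectability; the clean way around this is to use the construction in (i) directly: since $P=\lim_T P_T(0)$ is the value function by definition, $\tfrac{1}{2}\bar{x}(0)^\top P\bar{x}(0)$ is already the infimum cost, and completion of squares with $v^*$ only needs the inequality $\bar{x}(T)^\top P\bar{x}(T)\ge 0$ to show this infimum is attained. In the converse of (iii), the fastest way to see that the minimal $P$ annihilates an undetectable mode $\xi$ (with $\bar{A}\xi=\lambda\xi$, $\bar{C}\xi=0$, $\mathrm{Re}(\lambda)\ge 0$) is again the value-function interpretation: the control $v\equiv 0$ gives trajectory $e^{\lambda t}\xi$ with zero running cost, so $\xi^* P\xi=0$ and hence $P\xi=0$, after which $\bar{A}_{cl}\xi=\lambda\xi$ follows immediately.
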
 
By Lemma \ref{the1}, to ensure the existence of the optimal control, we assume that 
\begin{Assumption}\label{assump2}
The pair $(\bar{A}, \bar{B})$ is stabilizable.
\end{Assumption}
Besides, in order to assure that the optimal trajectory converges to a nonzero stationary steady state, we make the following assumption.
\begin{Assumption}\label{assump1}
The pair $(\bar{C},\bar{A})$ is undetectable and the only undetectable eigenvalue of $\bar{A}$ for the pair $(\bar{C},\bar{A})$ is $0$, and its algebraic multiplicity and geometric multiplicity are equal.   
\end{Assumption}
This assumption is based on the fact that undetectable eigenvalues cannot be stabilized by optimal control, and it guarantees that under optimal control, the closed-loop system \eqref{closeloop} is marginally stable.
That is, all eigenvalues of \( \bar{A}_{cl} \) are non-positive, and every Jordan block corresponding to the eigenvalue 0 is of size \( 1 \times 1 \).
\subsection{Steady-state space}
Denote the steady state of the multi-agent system and the exogenous system by $x_{ss}$ and $w_{ss}$ respectively. Set the combined state $\bar{x}_{ss}=\begin{bmatrix} x_{ss} \\ w_{ss} \end{bmatrix} $, then  the following conditions hold:
\\ (\uppercase\expandafter{\romannumeral 1})  $P \bar{x}_{ss}= 0,$ 
\\(\uppercase\expandafter{\romannumeral 2}) $\bar{Q} \bar{x}_{ss}= 0$, i.e., $\bar{C}\bar{x}_{ss}= 0$,
\\ (\uppercase\expandafter{\romannumeral 3})  $\bar{A} \bar{x}_{ss}= 0.$ 

Condition (\uppercase\expandafter{\romannumeral 1}) is implied by the existence of optimal control while (\uppercase\expandafter{\romannumeral 2}) can be derived from the fact that $\operatorname{Ker}(P)\subseteq \operatorname{Ker}(\bar C)$.
Condition (\uppercase\expandafter{\romannumeral 3}) ensures that the steady state is stationary. 

Although (\uppercase\expandafter{\romannumeral 1}) is well known, it does not seem to be easy to locate a proof. For the sake of completeness, we provide a brief proof as follows. 
Since the optimal solution of the linear quadratic optimal control problem \eqref{eq_optimal} exists, the performance index in \eqref{eq_optimal} is bounded for all feasible control. Consider the finite horizon index
\begin{equation}\label{finite_integral}
\bar{J}_T =\frac{1}{2} \int_{t}^{t+T}\left(\bar{x}^{\top} \bar{Q} \bar{x}+\|v\|^{2}\right) d t, 
\end{equation}
then the optimal cost of \eqref{finite_integral} is $\bar{x}^{\top}(t) P(T) \bar{x}(t)$. Thus $\bar{J}_T \geq \bar{x}^{\top}(t) P(T) \bar{x}(t)$ for any feasible control. Since  $\forall\,T>0, \bar{J}_T \to 0$ when $t \to \infty$, otherwise the infinite integral will not converge, we have 
$\lim_{t \to \infty} \bar{x}^{\top}(t) P(T) \bar{x}(t)= 0.$ Thus $$\lim_{T \to \infty} \lim_{t \to \infty} \bar{x}^{\top}(t) P(T) \bar{x}(t) = \lim_{t \to \infty} \bar{x}^{\top}(t) P\bar{x}(t)= 0,$$
where $P$ is the smallest solution of the algebraic Riccati equation (\ref{Rieq}). Thus $\lim_{t \to \infty}P\bar{x}(t)=P \bar{x}_{ss}= 0$, which completes the proof.

The following result gives the expression of the steady state $\bar{x}_{ss}$.
\begin{lemma}\label{theorem1} (Steady state).
Under Assumptions \ref{assump2}-\ref{assump1}, the state of the closed-loop system \eqref{closeloop} will converge to a steady state \(\bar{x}_{ss}\) which satisfies
\begin{align}
\bar{x}_{ss}= \sum_{i=1}^{s} (\phi_i^\top \bar{x}(0)) \phi_i ,
\end{align}
where $\phi_i \in \mathbb{R}^{(n+k) \times 1}~(i=1,2,\cdots,s)$ are orthonormal eigenvectors of the zero eigenvalue of the closed-loop matrix $\bar{A}_{cl}$, $s$ is the algebraic multiplicity of the zero-eigenvalue.
\end{lemma}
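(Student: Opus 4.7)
The plan is to extract the spectral structure of $\bar{A}_{cl}$ from the assumptions, then decompose $\bar{x}(0)$ along invariant subspaces and pass to the limit in $\bar{x}(t)=e^{\bar{A}_{cl}t}\bar{x}(0)$. By Lemma~\ref{the1} together with Assumption~\ref{assump2} the feedback is well defined, and Assumption~\ref{assump1} with the subsequent remark yields that $\bar{A}_{cl}$ has $0$ as a semi-simple eigenvalue of algebraic (hence geometric) multiplicity $s$ while all other eigenvalues lie in the open left half-plane. A short Riccati computation identifies the $0$-eigenspace: rewriting \eqref{Rieq} in Lyapunov form $\bar{A}_{cl}^{\top}P+P\bar{A}_{cl}=-\bar{Q}-P\bar{B}\bar{B}^{\top}P$ and, for $\phi$ with $\bar{A}_{cl}\phi=0$, sandwiching by $\phi^{\top}(\cdot)\phi$, the left-hand side vanishes while both right-hand terms are nonpositive, giving $\bar{Q}\phi=0$ and $\bar{B}^{\top}P\phi=0$, hence $\bar{A}\phi=0$. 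Thus $\ker(\bar{A}_{cl})=\ker(\bar{A})\cap\ker(\bar{C})$, of dimension $s$.

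Next I would split $\mathbb{R}^{n+k}=\ker(\bar{A}_{cl})\oplus V_{s}$, where $V_{s}$ is the stable invariant subspace (spanned by generalized eigenvectors for the nonzero eigenvalues), decompose $\bar{x}(0)=\bar{x}_{0}^{(0)}+\bar{x}_{0}^{(s)}$ accordingly, and use invariance of the two subspaces together with the fact that $\bar{A}_{cl}|_{V_{s}}$ is Hurwitz to obtain
\begin{equation*}
\bar{x}(t)=\bar{x}_{0}^{(0)}+e^{\bar{A}_{cl}t}\bar{x}_{0}^{(s)}\longrightarrow\bar{x}_{0}^{(0)}\qquad(t\to\infty),
\end{equation*}
so that $\bar{x}_{ss}=\bar{x}_{0}^{(0)}$.

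The final step, and the place where I expect the main obstacle, is to express $\bar{x}_{0}^{(0)}$ as $\sum_{i=1}^{s}(\phi_{i}^{\top}\bar{x}(0))\phi_{i}$ for an orthonormal basis $\{\phi_{1},\dots,\phi_{s}\}$ of $\ker(\bar{A}_{cl})$. This amounts to showing that the direct-sum projection along $V_{s}$ coincides with the Euclidean orthogonal projection onto $\ker(\bar{A}_{cl})$, equivalently $V_{s}\perp\ker(\bar{A}_{cl})$. I would attempt this using two ingredients: first, $P\phi=0$ for every $\phi\in\ker(\bar{A}_{cl})$, which follows from property~(I) applied to the stationary trajectory $\bar{x}(t)\equiv\phi$; second, the $P$-orthogonality of the eigenspaces of $\bar{A}_{cl}$ derivable from the Lyapunov identity above. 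Intertwining these with an eigenrelation $\bar{A}_{cl}\psi=\lambda\psi$ ($\mathrm{Re}(\lambda)<0$) should yield the required Euclidean orthogonality, with some additional care needed to cover generalized eigenvectors in $V_{s}$.
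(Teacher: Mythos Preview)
Your spectral decomposition and the passage to the limit are correct and coincide with the paper's Jordan-form computation: both yield the \emph{spectral} projection of $\bar{x}(0)$ onto $\ker(\bar{A}_{cl})$ along the stable invariant subspace $V_{s}$. You are also right that the only nontrivial point is whether this oblique projector equals the orthogonal projector $\sum_{i}\phi_{i}\phi_{i}^{\top}$, i.e.\ whether $V_{s}\perp\ker(\bar{A}_{cl})$. The paper does not argue this; it writes ``by standard construction of the $V$ matrix'' and passes directly from $V\operatorname{diag}(0,I_{s})V^{-1}\bar{x}(0)$ to $\sum_{i}\phi_{i}\phi_{i}^{\top}\bar{x}(0)$, which tacitly assumes that the left and right $0$-eigenspaces of $\bar{A}_{cl}$ coincide.

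Your proposed route through $P$ will not close this gap. Because $P\phi=0$ for $\phi\in\ker(\bar{A}_{cl})$, the relation $\phi^{\top}P\psi=0$ is vacuous for every $\psi$, and sandwiching the Lyapunov identity $\bar{A}_{cl}^{\top}P+P\bar{A}_{cl}=-\bar{Q}-P\bar{B}\bar{B}^{\top}P$ between $\phi$ and any $\psi\in V_{s}$ collapses to $0=0$: every term carries a factor $P\phi$, $\bar{Q}\phi$, or $\bar{A}_{cl}\phi$, all of which vanish. What you would actually need is $\ker(\bar{A}_{cl}^{\top})=\ker(\bar{A}_{cl})$, equivalently $\bar{A}^{\top}\phi=P\bar{B}\bar{B}^{\top}\phi$ for each such $\phi$; nothing in the hypotheses forces this, since $\bar{A}$ (with off-diagonal block $BH$ and, in general, $F_{1}\neq(BH)^{\top}$) is not assumed normal. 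The honest conclusion of your argument is the biorthogonal formula $\bar{x}_{ss}=\sum_{i}(\psi_{i}^{\top}\bar{x}(0))\phi_{i}$ with $\{\psi_{i}\}$ the dual left $0$-eigenvectors of $\bar{A}_{cl}$, which is exactly what $V\operatorname{diag}(0,I_{s})V^{-1}$ encodes; upgrading $\psi_{i}$ to $\phi_{i}$ requires an additional symmetry assumption that neither you nor the paper supplies.
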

\begin{proof}
Based on Assumptions \ref{assump2}-\ref{assump1}, $\bar{A}_{cl}$ is marginally stable, meaning that each zero eigenvalue is associated with a first-order Jordan block, and all other eigenvalues have negative real parts.
Let $\operatorname{diag}(J_{cl},0_{s \times s}) = V^{-1} \bar{A}_{cl} V$ denote the Jordan canonical form of $\bar{A}_{cl}$. Here, $J_{cl}$ comprises all Jordan blocks corresponding to eigenvalues with negative real parts, while \(0_{s \times s}\) is an \(s \times s\) zero matrix representing the Jordan blocks associated with zero eigenvalues.
For the close-loop system \eqref{closeloop}, straight calculation gives 
\begin{align}
&\bar{x}_{ss} \!
=\! \lim_{t\rightarrow \infty} e^{\bar{A}_{cl}t}\bar{x}(0)
= \!\lim_{t\rightarrow \infty}V\, e^{\operatorname{diag}(J_{cl},0)t}V^{-1}\bar{x}(0) \nonumber\\[2mm]
&=\!\lim_{t\rightarrow \infty} V\operatorname{diag}(e^{J_{cl}t},I_s)V^{-1}\bar{x}(0)\!=\!V\operatorname{diag}(0,I_s)V^{-1}\bar{x}(0).
\end{align}
By standard construction of the $V$ matrix, we then have
$\bar{x}_{ss} \!=\! \sum_{i=1}^{s} \phi_i  \phi_i^\top \bar{x}(0)\!=\!\sum_{i=1}^{s} (\phi_i^\top \bar{x}(0)) \phi_i$, and the proof is complete.
\end{proof}
Here $\phi_{i}^{\top} \bar{x}(0)\in \mathbb{R}$ represents the weight of $\phi_{i}$ in $\bar{x}_{ss}$ and the steady state $\bar{x}_{ss}$ is a linear combination of the right eigenvectors of the closed-loop matrix $\bar{A}_{cl}$ corresponding to the zero eigenvalue, i.e., $\bar{x}_{ss} \in \operatorname{Ker}(\bar{A}_{cl})$. 

Actually, $\operatorname{Ker}(\bar{A}_{cl})$ is the \textbf{steady state space}. For \eqref{closeloop}, if $ \bar{x}(0)\in \operatorname{Ker}(\bar{A}_{cl}) $, there is $ \bar{x}(t) =\bar{x}(0),\forall t \ge 0 $. Therefore, $ \forall \phi \in \operatorname{Ker}(\bar{A}_{cl}) $, $\phi$ must be a steady state and satisfies conditions (I)-(III), i.e.,
\begin{equation}
\operatorname{Ker}(\bar{A}_{cl})
\;\subseteq\;
\operatorname{Ker}(\!\begin{bmatrix}
P \\
\bar{C}\\
\bar{A}
\end{bmatrix})
\;=\;
\operatorname{Ker}(\begin{bmatrix}
P \\ \bar{A}
\end{bmatrix}),
\end{equation}
where $ \operatorname{Ker}(P) \subseteq \operatorname{Ker}(\bar{C}) $ is used.

Because $\operatorname{Ker}(\bar{A}_{cl})$ is related to the solution of Riccati equation, it is hard to analyze. 
Then we define another space $\operatorname{Ker}( \left[\begin{array}{ll}
\bar{A}^\top \,\, \bar{C}^\top
\end{array}\right]^\top)$.    
We will prove the equivalence of the steady-state space $\operatorname{Ker}(\bar{A}_{cl})$ and $\operatorname{Ker}( \left[\begin{array}{ll} \bar{A}^\top \,\, \bar{C}^\top \end{array}\right]^\top)$ as follows.
\begin{proposition}\label{theorem0}(Steady-state space).
Consider the optimal control problem \eqref{eq_optimal} under Assumptions \ref{assump2}-\ref{assump1}. Then the steady-state space $\operatorname{Ker}(\bar{A}_{cl})$ satisfies
\begin{equation}\label{steady_space}
\operatorname{Ker}(\bar{A}_{cl})=\operatorname{Ker}( \left[\begin{array}{ll} \bar{A}^\top \,\, \bar{C}^\top \end{array}\right]^\top).
\end{equation}
\end{proposition}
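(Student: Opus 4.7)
The plan is to prove the set equality by establishing both inclusions, using the Riccati equation as the main tool.

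For the inclusion $\operatorname{Ker}(\bar{A}_{cl}) \subseteq \operatorname{Ker}([\bar{A}^\top \; \bar{C}^\top]^\top)$, I would simply invoke the discussion immediately preceding the proposition: any $\phi \in \operatorname{Ker}(\bar{A}_{cl})$ is a stationary trajectory of the closed loop and must therefore satisfy conditions (I)--(III), in particular $\bar{A}\phi = 0$ and $\bar{C}\phi = 0$. So this direction is essentially free.

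The substantive direction is the reverse inclusion. Let $\phi$ satisfy $\bar{A}\phi = 0$ and $\bar{C}\phi = 0$; I need to show $\bar{A}_{cl}\phi = (\bar{A} - \bar{B}\bar{B}^\top P)\phi = 0$, which reduces to $\bar{B}\bar{B}^\top P\phi = 0$, and it suffices to prove $\bar{B}^\top P\phi = 0$. The key step is to right-multiply the Riccati equation \eqref{Rieq} by $\phi$. Since $\bar{A}\phi = 0$ and $\bar{Q}\phi = \bar{C}^\top \bar{C}\phi = 0$, the equation collapses to
\begin{equation*}
\bar{A}^\top P \phi - P\bar{B}\bar{B}^\top P \phi = 0.
\end{equation*}
Then left-multiplying by $\phi^\top$ and using $\phi^\top \bar{A}^\top = (\bar{A}\phi)^\top = 0$ yields $\|\bar{B}^\top P \phi\|^2 = 0$, so $\bar{B}^\top P \phi = 0$. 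This gives $\bar{A}_{cl}\phi = \bar{A}\phi - \bar{B}(\bar{B}^\top P \phi) = 0$, completing the inclusion.

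The main obstacle is really just the algebraic maneuver of extracting $\bar{B}^\top P \phi = 0$ from the Riccati equation: the trick is to combine the right-multiplication (to kill $P\bar{A}\phi$ and $\bar{Q}\phi$) with a subsequent left-multiplication by $\phi^\top$ to turn the remaining identity into a sum of squares. Once that is in place, the assertion $\bar{A}_{cl}\phi = 0$ is immediate and both inclusions together give \eqref{steady_space}.
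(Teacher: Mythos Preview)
Your proof is correct and follows essentially the same route as the paper: the forward inclusion is taken from conditions (II)--(III) preceding the proposition, and the reverse inclusion is obtained by sandwiching the Riccati equation between $\phi^\top$ and $\phi$ to force $\|\bar{B}^\top P\phi\|^2=0$. The only cosmetic difference is that the paper performs the left- and right-multiplications in a single step, whereas you separate them and spell out which terms vanish at each stage.
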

\begin{proof}
From Conditions (II) and (III) for a steady state, we immediately see that $\operatorname{Ker}(\bar{A}_{cl}) \subseteq \operatorname{Ker}( \left[\begin{array}{ll} \bar{A}^\top \,\, \bar{C}^\top\!\end{array}\right]\!^\top)$. 

Next, we prove the reverse inclusion. $\forall \bar{x} \in \operatorname{Ker}( \left[\begin{array}{ll} \bar{A}^\top \,\, \bar{C}^\top \end{array}\right]^\top)$, we have $\bar{Q}\bar{x}=0.$ 
By left-multiplying the algebraic Riccati equation \eqref{Rieq} by $\bar{x}^{T}$ and right-multiplying by $\bar{x}$, we can obtain $\bar{B}^\top P \bar{x}= 0$. Thus $\bar{A}_{c l}\bar{x}=(\bar{A}-\bar{B}\bar{B}^{\top}P)\bar{x}=0$, which implies $\operatorname{Ker}( \left[\begin{array}{ll} \bar{A}^\top \,\, \bar{C}^\top \end{array}\right]^\top) \subseteq \operatorname{Ker}(\bar{A}_{cl})$. 

Combining both inclusions, we conclude that, $\operatorname{Ker}(\bar{A}_{cl})= \operatorname{Ker}( \left[\begin{array}{ll} \bar{A}^\top \,\, \bar{C}^\top \end{array}\right]^\top)$, which completes the proof.
\end{proof}
\begin{remark}
Actually, any nonzero vector in $\operatorname{Ker}( \left[\begin{array}{ll} \bar{A}^\top \,\, \bar{C}^\top \end{array}\!\right]\!^\top\!)$ is an eigenvector of $\bar{A}$ corresponding to the undetectable eigenvalue $0$ for the pair $(\bar{C},\bar{A})$. Thus, the steady state of the linear-quadratic optimal control problem \eqref{eq_optimal} is the eigenvector of $\bar{A}$ corresponding to the undetectable eigenvalue $0$.
\end{remark}
\subsection{Maximal steady-state space}
Consider the steady-state space defined in \eqref{steady_space}:
\begin{equation*}
\operatorname{Ker}\left(\begin{bmatrix}
\bar{A} \\
\bar{C}
\end{bmatrix}\right) = \operatorname{Ker}\left(\begin{bmatrix}
A & BH \\
F_{1} & F_{2} \\
C & 0
\end{bmatrix}\right).
\end{equation*}
In the multi-agent system described by \eqref{eq1}, the dynamics (including matrices \(A\) and \(B\)) are given. In contrast, the control strategy specified in \eqref{eq4} and \eqref{eq9} can be designed, involving the exogenous system (with matrices \(H\), \(F_1\), and \(F_2\)) and the performance index (matrix \(C\)). Regardless of the control design, the steady state must satisfy
\begin{equation}\label{x_ss}
Ax_{ss} = -BH\,w_{ss}.
\end{equation}
This condition confines the steady state to a particular subspace, which we define as the \emph{maximal steady-state space}. Discussing this space is crucial because it characterizes the theoretical limits of the formation patterns that can be achieved through subsequent control design. By understanding the maximal steady-state space, we can determine the full range of feasible formations and thereby guide the design process to realize as many formations as possible.

Without loss of generality, we assume that the system matrices in \eqref{eq1} have already been transformed into the Jordan normal form via a linear transformation, i.e.,
\begin{equation}\label{jordan1}
\begin{aligned}
A\!=\!\!\begin{bmatrix}
J(\lambda_{1}) & & & \\
& J(\lambda_{2})  & & \\
& & \ddots  & \\
& & &  J(\lambda_{l})
\end{bmatrix}\!,
B \!= \begin{bmatrix}
B_{\lambda_1} \\
B_{\lambda_2} \\
\vdots  \\
B_{\lambda_l}
\end{bmatrix},
\end{aligned}
\end{equation}
where $\lambda_{i} \neq \lambda_{j}, \forall i \neq j \in \mathcal{I}, ~\mathcal{I}=\{1,2,\cdots,l\}$; $J(\lambda_i)\in \mathbb{R}^{\sigma_{i} \times \sigma_{i}},~B_{\lambda_i}\in \mathbb{R}^{\sigma_{i} \times m}$, $\sigma_i$ is the algebraic multiplicity of $\lambda_{i}$. We group the Jordan blocks corresponding to the
nonzero eigenvalues into $A_r$ and explicitly write out each Jordan block corresponding to the eigenvalue $0$. Then equation \eqref{jordan1} can be rewritten as follows:
\begin{equation}\label{A_rB_r}
A =
\begin{bmatrix}
A_{r} &           &            &            &           \\
      & J_{1}(0)  &            &            &           \\
      &          & J_{2}(0)   &            &           \\
      &          &            & \ddots     &           \\
      &          &            &            & J_{\alpha_0}(0)
\end{bmatrix},
\quad
B =
\begin{bmatrix}
B_{r} \\[0.5pt]
B_{0}^{1} \\[1pt]
B_{0}^{2} \\[1pt]
\vdots   \\[1pt]
B_{0}^{\alpha_0}
\end{bmatrix},
\end{equation}
where $A_r \in \mathbb{R}^{r_0 \times r_0} $ is the block diagonal matrix composed of all Jordan blocks corresponding to the nonzero eigenvalues, $B_r \in \mathbb{R}^{r_0 \times m} $, $\alpha_0$ is the geometric multiplicity of the eigenvalue $0$ and 
\begin{equation}\label{A_rB_r2}
J_{j}(0)\!=\!
\begin{bmatrix}
0 & 1 & 0 & \cdots & 0\\
0 & 0 & 1 & \cdots & 0\\
\vdots & & \ddots & \ddots & \vdots\\
0 & 0 &        &  0       & 1\\
0 & 0 &        &          & 0
\end{bmatrix}\in \mathbb{R}^{r_{j} \times r_{j}},
B_{0}^{j}
\!\triangleq\!
\begin{bmatrix}
B_0^{j}(1:r_j-1)\\[2pt]
B_0^{j}(r_{j})
\end{bmatrix}\in \mathbb{R}^{r_{j} \times m}, \text{for } j \;=\;1,2,\dots,\alpha_0,
\end{equation}
where $\sum_{j=1}^{\alpha_{0}} r_{j}$ is the algebraic multiplicity of the eigenvalue $0$, $~B_0^{j}(1:r_j-1)\in \mathbb{R}^{(r_{j}-1) \times m}$ is the submatrix of $B_{0}^{j}$ formed by the first $(r_j-1)$ rows of $B_{0}^{j}$, $B_0^{j}(r_j)\in \mathbb{R}^{1 \times m}$ is the last row of $B_{0}^{j}$. 

With matrices $A$ and $B$ given as \eqref{A_rB_r}-\eqref{A_rB_r2}, we can derive the maximal steady-state space in the following theorem.
\begin{theorem}\label{theorem3}
(Maximal steady-state space). Consider the optimal control problem \eqref{eq_optimal}, where $A\in \mathbb{R}^{n\times n},~B\in \mathbb{R}^{n\times m}$ are given as \eqref{A_rB_r}-\eqref{A_rB_r2}. 
Under Assumptions \ref{assump2}-\ref{assump1}, for any linear exogenous system (namely for any matrices $F_{1}, F_{2}, G, H, K$), the steady state of the multi-agent system $x_{ss}$ must belong to
\begin{equation}\label{eq14}
\operatorname{Im}(\widetilde{X})=\operatorname{Im}(
\left[\begin{array}{{c:c}}
A_{r}^{-1}B_{r}U_{B_{\text{last}}} & \\
\widetilde{X}_1 & \\
\widetilde{X}_2 & \quad U_{A} \\  
\vdots & \\
\widetilde{X}_{\alpha_{0}} & 
\end{array}\right]),
\end{equation}
\begin{equation}\label{Blast}
\text{where } \widetilde{X}_i\!=\!\begin{bmatrix}
0 \\
B_{0}^{i}\left(1:r_{i}-1\right) U_{B_{\text{last}}}  \end{bmatrix},\text { for } i=1,2, \cdots, \alpha_{0} ,~
B_{\text{last}}=\begin{bmatrix}
B_0^{1\top}(r_1) &\,
B_0^{2\top}(r_2) &\,
\cdots &\,
B_0^{\alpha_0\top}(r_{\alpha_0})
\end{bmatrix}^\top,    
\end{equation}
$U_{B_{\text{last}}}$ and $U_{A}$ are the matrices whose columns form a standard orthonormal basis for $\operatorname{Ker}(B_{\text{last}})$ and $\operatorname{Ker}(A)$ respectively, $\widetilde{X}$ is a matrix whose columns are linearly independent.
\end{theorem}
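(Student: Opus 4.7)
The plan is to reduce the claim to a linear algebra exercise using only the necessary steady-state identity \eqref{x_ss}, $A x_{ss}=-B H w_{ss}$. Setting $z \triangleq -H w_{ss}$ and observing that $H$ and $w_{ss}$ (together with the rest of the exogenous data) are completely free, $z$ can be made any vector in $\mathbb{R}^m$. Hence $x_{ss}$ belongs to the maximal steady-state space if and only if there exists $z\in\mathbb{R}^m$ with $A x_{ss}=Bz$, and the task reduces to characterizing the preimage $\{x_{ss}: A x_{ss}\in \operatorname{Im}(B)\}$ under the Jordan form \eqref{A_rB_r}-\eqref{A_rB_r2}.

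Next I would partition $x_{ss}=(x_r^\top,(x_0^1)^\top,\dots,(x_0^{\alpha_0})^\top)^\top$ conformally with $A$. The equation $A x_{ss}=Bz$ then decouples into $A_r x_r=B_r z$ and $J_j(0)\,x_0^j=B_0^j z$ for $j=1,\dots,\alpha_0$. Since every eigenvalue of $A_r$ is nonzero, $A_r^{-1}$ exists and the first block immediately gives $x_r=A_r^{-1}B_r z$. For each nilpotent block, writing $x_0^j=(a_1^j,\dots,a_{r_j}^j)^\top$ and using that $J_j(0)$ is the upward shift, the identity $J_j(0)\,x_0^j=B_0^j z$ is equivalent to three facts: (i) $a_1^j$ is free; (ii) $(a_2^j,\dots,a_{r_j}^j)^\top = B_0^j(1:r_j-1)\,z$; and (iii) the last-row compatibility $B_0^j(r_j)\,z=0$.

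Collecting the last-row conditions over all $j$ yields the single constraint $B_{\text{last}}\,z=0$, i.e., $z\in\operatorname{Ker}(B_{\text{last}})=\operatorname{Im}(U_{B_{\text{last}}})$. Parametrizing $z=U_{B_{\text{last}}}\tilde z$ and substituting back, the forced portion of $x_{ss}$ is exactly the first column block of $\widetilde X$ acting on $\tilde z$: namely $x_r=A_r^{-1}B_r U_{B_{\text{last}}}\tilde z$, and each $x_0^j$ has leading entry zero and tail $B_0^j(1:r_j-1)U_{B_{\text{last}}}\tilde z$, i.e., exactly $\widetilde X_j\tilde z$. The remaining freedom is in the scalars $a_1^j$; since a vector supported only at the leading entry of each Jordan-$0$ block lies precisely in $\operatorname{Ker}(A)$, these free scalars are parametrized by $U_A c$. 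Combining, $x_{ss}=\widetilde X\,\bigl(\tilde z^\top, c^\top\bigr)^\top \in \operatorname{Im}(\widetilde X)$.

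Finally, linear independence of the columns of $\widetilde X$ follows from disjoint support: the columns of $U_A$ are nonzero only at the leading entry of each Jordan-$0$ block, while the first column block vanishes at exactly those coordinates by construction of the $\widetilde X_j$; independence within each column block is inherited from the orthonormal columns of $U_{B_{\text{last}}}$ and $U_A$. I expect the main technical obstacle to be the bookkeeping for the nilpotent blocks — in particular, spotting that the individual last-row constraints merge into the single matrix $B_{\text{last}}$ and that the residual free scalars $a_1^j$ parametrize exactly $\operatorname{Ker}(A)$. Once these two observations are in place, the explicit form \eqref{eq14} assembles directly.
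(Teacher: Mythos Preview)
Your proposal is correct and follows essentially the same argument as the paper: reduce to $A x_{ss}=Bz$ with $z=-Hw_{ss}$ free, decouple along the Jordan blocks, read off $x_r=A_r^{-1}B_r z$, extract the free leading entries and the forced tails from the nilpotent blocks, and collect the last-row constraints into $B_{\text{last}}z=0$. The only addition you make is the linear-independence argument for the columns of $\widetilde X$, which the paper asserts but does not prove; note that your justification ``inherited from the orthonormal columns of $U_{B_{\text{last}}}$'' tacitly requires $B$ to have full column rank (otherwise $Bz=0$ for some $z\ne 0$ in $\operatorname{Ker}(B_{\text{last}})$ would yield a dependent column), an assumption the paper uses elsewhere without stating it here.
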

\begin{proof} 
For any exogenous system, the steady state must satisfy $Ax_{ss}=-B Hw_{ss}$ by \eqref{x_ss}.
Since $H$ and $w_{ss}$ belong to the exogenous system, they can be arbitrarily chosen as long as they are finite. Define $\hat{w}=-Hw_{ss}\in \mathbb{R}^{m \times  1}$, then $\hat{w}$ can also be arbitrarily selected and the condition is equal to
\begin{equation}\label{eq_state_new}
Ax_{ss}=B \hat{w}.
\end{equation}
Define $x_{ss}= \begin{bmatrix}x_r^\top & x_1^\top & x_2 ^\top& \cdots & x_{\alpha_0}^\top\end{bmatrix}^\top$, where $x_r \in \mathbb{R}^{r_0},\ x_i \in \mathbb{R}^{r_i},\ \text{for }i=1,2,\ldots,\alpha_0.$
Then \eqref{eq_state_new} is equal to
\begin{equation}
A_{r} x_{r}=B_{r} \hat{w},~
J_{i}(0) x_{i}=B_{0}^{i} \hat{w},\text { for } i=1,2, \cdots, \alpha_{0} .
\end{equation}
Thus
\begin{equation}\label{x_r}
x_r = A_r^{-1}B_r\hat{w},~
x_i\!=\!\begin{bmatrix}\ast \\ B_0^i(1:r_i-1)\hat{w}\end{bmatrix},\text{for } i=1,2, \cdots, \alpha_{0},
\end{equation}
where $\ast\in \mathbb{R}^{1 \times m}$ is arbitrarily, $\hat{w}$ satisfies
$B_0^i(r_i)\hat{w} = 0,\text { for } i=1,2, \cdots, \alpha_{0} $.
\\Define $B_{\text{last}}=\begin{bmatrix}
B_0^{1\top}(r_1) &\,
B_0^{2\top}(r_2) &\,
\cdots &\,
B_0^{\alpha_0\top}(r_{\alpha_0})
\end{bmatrix}^\top$, then we have 
$B_{\text{last}}\hat{w}=0.$ Thus 
\begin{equation}\label{w}
\hat{w}\in \mathrm{Im}(U_{B_\text{last}}),
\end{equation}
where $U_{B_{\text{last}}}$ is the matrix whose columns form a standard orthonormal basis for $\operatorname{Ker}(B_{\text{last}})$.
In \eqref{x_r}, because $\ast\in \mathbb{R}^{1 \times m}$ is arbitrarily, we have
\begin{equation}\label{x_i_final}
x_i \in \mathrm{Im}(\left[\begin{array}{cc}
0 & 1 \\
B_0^i(1:r_i-1) U_{B_\text{last}}& 0_{(r_i-1)\times 1}
\end{array}\right]),  \text{for } i=1,2, \cdots, \alpha_{0}.
\end{equation}
Combining \eqref{x_r},\eqref{w},\eqref{x_i_final}, we have
\begin{equation}\label{eqx}
\left[\begin{array}{c}
x_{r} \\
x_{1} \\
x_{2} \\
\vdots \\
x_{\alpha_{0}}
\end{array}\right] \in \operatorname{Im}(
\left[\begin{array}{{c:c:c:c:c}}
A_{r}^{-1}B_{r}U_{B_{\text{last}}} & 0&0& &0\\
0 &1&0& &0 \\
B_{0}^{1}\left(1:r_{1}-1\right) U_{B_{\text{last}}} & 0&0& &0\\
0 & 0&1& &0  \\
B_{0}^{2}\left(1:r_{2}-1\right) U_{B_{\text{last}}} &0&0&\cdots &0  \\
\vdots &\vdots&\vdots& &\vdots \\
0 & 0&0& &1 \\B_{0}^{\alpha_{0}}\left(1:r_{\alpha_{0}-1}\right)U_{B_{\text{last}}} & 0&0& &0
\end{array}\right]),
\end{equation}
where the columns on the right-hand side (each containing a single ``1'' and zeros elsewhere) form exactly the Kernel space of \(A\). By collecting these vectors into a matrix \(U_A\), equation~\eqref{eqx} becomes equivalent to equation~\eqref{eq14}, which completes the proof.
\end{proof}
Define the space $\operatorname{Im}(\widetilde{X})$ as the maximal steady-state space, where the steady state $x_{ss}$ of the multi-agent system must belong to. The design of the exogenous system cannot force the state to converge to a formation outside $\operatorname{Im}(\widetilde{X})$; it can only determine which specific formation within $\operatorname{Im}(\widetilde{X})$ the state converges to. When \(A\) has no zero eigenvalues, \(\operatorname{Im}(\widetilde{X})=\operatorname{Im}(A^{-1}B)\). The following corollary considers the case in which all Jordan blocks associated with the zero eigenvalue of \(A\) are 1-dimensional. In this case, rewrite \eqref{A_rB_r} as 
\begin{equation}\label{A1-dimensional}
A =
\begin{bmatrix}
A_{r} &     0   \\   
 0     &    0_{(n-r_0)\times (n-r_0)}
\end{bmatrix},
\quad
B =
\begin{bmatrix}
B_{r} \\[0.5pt]
B_{0} \end{bmatrix},
\end{equation}
where $A_r \in \mathbb{R}^{r_0 \times r_0} $ is the block diagonal matrix composed of all Jordan blocks corresponding to the nonzero eigenvalues, $B_r \in \mathbb{R}^{r_0 \times m} $. 

\begin{corollary}\label{cor1}
Consider the optimal control problem \eqref{eq_optimal}, where $A\in \mathbb{R}^{n\times n},~B\in \mathbb{R}^{n\times m}$ are given as \eqref{A1-dimensional} and all Jordan blocks associated with the zero eigenvalue of $A$ are 1-dimensional. 
Under Assumptions \ref{assump2}-\ref{assump1}, for any linear exogenous system (namely for any matrices $F_{1}, F_{2}, G, H, K$), the steady state of the multi-agent system $x_{ss}$ must belong to
\begin{equation}
\operatorname{Im}(\widetilde{X})=\operatorname{Im}(
\left[\begin{array}{{c:c}}
A_{r}^{-1}B_{r}U_{B_{0}} & 0\\
0 & I_{n-r_0}
\end{array}\right]),
\end{equation}
where $U_{B_{0}}$ is the matrix whose columns form a standard orthonormal basis for $\operatorname{Ker}(B_{0})$, $\widetilde{X}$ is a matrix whose columns are linearly independent.
\end{corollary}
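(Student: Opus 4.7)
The plan is to derive Corollary \ref{cor1} directly as a specialization of Theorem \ref{theorem3} in the scenario where every Jordan block associated with the eigenvalue $0$ has size $r_i=1$. So I would not redo the analysis from scratch; instead I would carefully substitute $r_i=1$ into formulas \eqref{eq14} and \eqref{Blast} and verify that the resulting expression collapses to the one claimed.

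First I would unpack the data. When $r_i=1$ for all $i=1,\dots,\alpha_0$, the geometric and algebraic multiplicities of the eigenvalue $0$ coincide, so $\alpha_0=n-r_0$, and each $J_i(0)\in\mathbb{R}^{1\times 1}$ equals $0$. Each $B_0^i\in\mathbb{R}^{1\times m}$ is a single row; the ``first $r_i-1=0$ rows'' submatrix $B_0^i(1:r_i-1)$ is empty, while $B_0^i(r_i)=B_0^i$ equals the whole row. Stacking these rows vertically gives $B_{\text{last}}=B_0$, and hence $U_{B_{\text{last}}}=U_{B_0}$. This already identifies the block $A_r^{-1}B_rU_{B_{\text{last}}}$ appearing in \eqref{eq14} with $A_r^{-1}B_rU_{B_0}$.

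Next I would compute the blocks $\widetilde{X}_i$ under $r_i=1$. Because the lower block $B_0^i(1:r_i-1)U_{B_{\text{last}}}$ is empty, $\widetilde{X}_i$ reduces to a single zero row, so the first column block of $\widetilde{X}$ in \eqref{eq14} becomes $\bigl[(A_r^{-1}B_rU_{B_0})^\top,\ 0\bigr]^\top$. On the other hand, with $A$ of the form \eqref{A1-dimensional}, $\operatorname{Ker}(A)$ is precisely the span of the last $n-r_0$ standard basis vectors, so a natural orthonormal basis gives $U_A=\bigl[0,\ I_{n-r_0}\bigr]^\top$. Plugging both into \eqref{eq14} yields
\begin{equation*}
\operatorname{Im}(\widetilde{X})=\operatorname{Im}\!\left(\left[\begin{array}{c:c} A_r^{-1}B_rU_{B_0} & 0 \\ 0 & I_{n-r_0}\end{array}\right]\right),
\end{equation*}
which is exactly the claim.

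I do not anticipate any real obstacle beyond bookkeeping; the main step that requires a little care is matching row/column dimensions of the empty submatrices $B_0^i(1:r_i-1)$ when $r_i=1$ and confirming that the resulting column blocks are linearly independent. Linear independence is immediate here because the first block column lives in the coordinates of $A_r$ (the first $r_0$ entries) while $I_{n-r_0}$ spans the complementary coordinates, so the combined matrix inherits the linear-independence property asserted in Theorem \ref{theorem3}.
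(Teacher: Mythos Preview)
Your proposal is correct and matches the paper's intended argument: the paper states Corollary~\ref{cor1} without proof, immediately after Theorem~\ref{theorem3}, as the direct specialization obtained by setting every $r_i=1$, which is exactly the substitution-and-bookkeeping you carry out. Your identifications $B_{\text{last}}=B_0$, $\widetilde{X}_i=0_{1\times\cdot}$, and $U_A=\begin{bmatrix}0\\ I_{n-r_0}\end{bmatrix}$ are all correct, so nothing further is needed.
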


\section{Inverse problem: system design}
In Section 3, we established the existence condition for a nonzero steady state and discussed the steady-state space. However, in practice, our focus shifts to designing the system to ensure that the multi-agent system achieves the desired formation, thereby addressing the inverse problem. Designing the system requires three steps:

In step 1, for any desired formation, we construct a matrix \(B\) such that the corresponding maximal steady-state space encompasses the desired formation while ensuring the controllability of the pair \((A, B)\). Most importantly, our construction guarantees that \(B\) has the minimal rank, which directly minimizes the number of required control inputs.

In step 2, we design the exogenous system—comprising matrices \(H\), \(F_1\), \(F_2\), \(K\), and \(G\), along with the performance index matrix \(C\)—to ensure that the state converges to the desired formation under optimal control. Notably, our construction attains the minimal possible dimension for the exogenous system.

In step 3, we specify the initial state \(w(0)\) of the exogenous system to achieve the desired formation scaling.
\subsection{Step 1: design of the input matrix \(B\)}
Theorem \ref{theorem3} characterizes the maximal steady-state space \(\operatorname{Im}(\widetilde{X})\), which is fundamentally determined by the input matrix \(B\). In this step, we design \(B\) to meet two essential conditions: (1) the maximal steady-state space \(\operatorname{Im}(\widetilde{X})\) must encompass the desired formation, and (2) the pair \((A, B)\) must be controllable. Although the construction of \(B\) is not unique, our approach deliberately constructs \(B\) with the minimal rank, thereby minimizing the number of required control inputs.

Define a collection of $z$ desired formations as the space Im\((X^{df})\), where
\(X^{df}=[x_{df}^1 \quad x_{df}^2 \quad \cdots \quad x_{df}^z] \in \mathbb{R}^{n \times z}\) is a matrix, $x_{df}^i \in \mathbb{R}^{n} (i=1,2,\cdots,z)$ represents a desired formation vector. The reason for considering a collection
of desired formations rather than a single desired formation is that we can design one \(B\) and then
use different exogenous systems to achieve various formations within the collection, eliminating
the need to redesign \(B\) each time.
Because $\operatorname{Im}(\widetilde{X})$ must comprise $\operatorname{Im}(U_A)$ by \eqref{eq14}, in order to make 
$\operatorname{Im}(X^{df}) \subseteq \operatorname{Im}(\widetilde{X})$, 
we can remove the components of $\operatorname{Im}(X^{df})$ that belong to $\operatorname{Im}(U_A)$ and define the residual desired formation space as
\begin{equation}\label{res_desired}
\operatorname{Im}(X^{rf})=\operatorname{Im}(X^{df}) \cap(\operatorname{Im}(U_A))^{\perp},
\end{equation}
where  $(\operatorname{Im}(U_A))^\perp$ represents the orthogonal complement of $\operatorname{Im}(U_A)$, $X^{rf} \in \mathbb{R}^{n \times z_0}$ 
and its column vectors are linearly independent. Define \begin{equation}\label{X_rf}
X^{rf}=
\begin{bmatrix}
X_r^{rf\top}&
X_1^{rf\top}&
X_2^{rf\top}&
\cdots &
X_{\alpha_0}^{rf\top}
\end{bmatrix}^\top,    
\end{equation} 
where $X_r^{rf} \in \mathbb{R}^{r_0\times z_0},~X_i^{rf} \in \mathbb{R}^{r_i\times z_0}$.

In order to make $(A,B)$ controllable, we give a lemma for the controllability of the Jordan normal form system first. 
\begin{lemma} \label{lemma2}
(cf. \cite{chen1984linear}) For the Jordan normal form system \eqref{jordan1}, the necessary and sufficient condition for the complete controllability is
\begin{equation}\label{Jordan_control}
\operatorname{rank}(B_{\lambda_i})=\alpha_i, \forall i \in \mathcal{I},
\end{equation} 
where $\mathcal{I}=\{1,2,\cdots,l\}$, $\alpha_i$ is the geometric multiplicity of the eigenvalue $\lambda_{i}$ of $A$.
\end{lemma}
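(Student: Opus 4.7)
The plan is to prove this standard textbook result via the Popov-Belevitch-Hautus (PBH) eigenvalue test, which states that $(A,B)$ is controllable if and only if $[A - \lambda I,\ B]$ has full row rank $n$ for every eigenvalue $\lambda$ of $A$. Since $A$ in \eqref{jordan1} is block diagonal with blocks $J(\lambda_i)$, I would check the PBH condition at each distinct eigenvalue $\lambda_i$ separately.

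First I would exploit the block structure: at $\lambda = \lambda_i$ the matrix $A - \lambda_i I$ is block diagonal, and for $j \neq i$ the block $J(\lambda_j) - \lambda_i I$ is invertible, so those blocks contribute full row rank and can be cleared by elementary row operations without affecting the rows associated with $J(\lambda_i)$. The PBH condition therefore reduces to verifying that $[\,J(\lambda_i) - \lambda_i I,\ B_{\lambda_i}\,]$ has full row rank $\sigma_i$.

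Next I would analyze the nilpotent block $N_i := J(\lambda_i) - \lambda_i I$, which is block diagonal in $\alpha_i$ nilpotent Jordan blocks. Each nilpotent Jordan block of size $r$ has rank exactly $r-1$ and has a single zero row, namely its last row. Hence $N_i$ has rank $\sigma_i - \alpha_i$, and the $\alpha_i$ ``missing'' rows are precisely the last rows of each constituent Jordan block. The augmented matrix $[\,N_i,\ B_{\lambda_i}\,]$ then attains full row rank $\sigma_i$ if and only if $B_{\lambda_i}$ supplies $\alpha_i$ linearly independent rows in exactly those positions, which is the rank condition \eqref{Jordan_control}. An equivalent (and perhaps cleaner) route is the left-eigenvector form of the PBH test: the left eigenvectors of a Jordan block are supported on its last coordinate, so the left eigenspace of $A$ at $\lambda_i$ has dimension $\alpha_i$ and is spanned by canonical vectors pointing to the last row of each Jordan block; requiring $v^\top B \neq 0$ for every such $v$ is exactly the stated rank condition.

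The main obstacle I expect is purely notational: the rank condition $\operatorname{rank}(B_{\lambda_i}) = \alpha_i$ is meaningful only under the convention that the relevant rows of $B_{\lambda_i}$ entering the test are those aligned with the last row of each Jordan nilpotent block inside $J(\lambda_i)$, since a naive reading referring to all $\sigma_i$ rows gives a necessary but not sufficient condition when $\alpha_i < \sigma_i$. I would therefore state this convention explicitly before invoking PBH, and refer to Chen's textbook for the canonical phrasing; once the row-selection convention is fixed, both directions of the equivalence follow directly from the PBH rank analysis above.
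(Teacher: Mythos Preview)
The paper does not prove this lemma; it is stated with a citation to Chen's textbook \cite{chen1984linear} and used as a known result. Your PBH-based argument is the standard textbook proof and is correct. Your observation about the notational convention is also apt: the classical statement requires linear independence of the $\alpha_i$ rows of $B_{\lambda_i}$ aligned with the last row of each Jordan block inside $J(\lambda_i)$, not merely $\operatorname{rank}(B_{\lambda_i})=\alpha_i$ for the full $\sigma_i\times m$ block, and making this explicit is the right call.
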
 

Based on the above lemma, we propose a design for the matrix $B$ that ensures $\operatorname{Im}(X^{df}) \subseteq \operatorname{Im}(\widetilde{X})$ and that the pair $(A,B)$ is controllable. Although the design of $B$ is not unique, the matrix $B$ we designed achieves the minimal rank among all matrices satisfying these conditions.
\begin{theorem}\label{theorem4}(Design $B$).
Consider the multi-agent system \eqref{eq1}, where $A\in \mathbb{R}^{n \times  n}$ is given as $A$ in \eqref{A_rB_r}. For any desired formation space $\operatorname{Im}(X^{df})$, design the matrix \( B \) as
$B = \left[ B^{df} \quad  B^{c} \right]$,
\begin{equation}
B^{df} \triangleq
\begin{bmatrix}
Ar\,X_r^{rf}\\
B_0^1\\
B_0^2\\
\vdots\\
B_0^{\alpha_0}
\end{bmatrix},B_0^j \triangleq \begin{bmatrix}
X_j^{rf}\,(2: r_j)\\
0
\end{bmatrix} \text{for }j=1,2,\cdots,\alpha_0,
\end{equation}
where $X_j^{rf}$ is defined in \eqref{X_rf}, $X_j^{rf}(2:r_j)$ is formed by the last $(r_j-1)$ rows of $X_j^{rf}$. 
For each eigenvalue \(\lambda_i\) of \(A\) (with algebraic multiplicity \(\sigma_i\)), we define the submatrices \(B_{\lambda_i}^{df}\) and \(B_{\lambda_i}^{c}\) to have \(\sigma_i\) rows. These submatrices are obtained by partitioning the previously defined matrix \(B^{df}\) (and the corresponding matrix \(B^c\)) as follows:
\begin{equation}\label{Blambda}
\begin{aligned}
\begin{bmatrix}
B_{\lambda_1}^{df} & B_{\lambda_1}^{c} \\
B_{\lambda_2}^{df} & B_{\lambda_2}^{c} \\
\vdots & \vdots \\
B_{\lambda_l}^{df} & B_{\lambda_l}^{c}
\end{bmatrix}
\;=\;
\begin{bmatrix}
B^{df} & B^{c}
\end{bmatrix}.
\end{aligned}
\end{equation}
Then $B^c$ is chosen such that 
\begin{equation}
\operatorname{rank}([B_{\lambda_i}^{df}\,\, B_{\lambda_i}^{c} ])=\alpha_i,\text{for } i \in \mathcal{I},
\end{equation}
where \(\mathcal{I} = \{1,2,\ldots,l\}\), and \(\alpha_i\) is the geometric multiplicity of the eigenvalue \(\lambda_i\) of the matrix \(A\). Plus, the number of columns in \(B^c\) is given by $\max_{i \in \mathcal{I}}
\bigl\{
\alpha_i \;-\; \operatorname{rank}\bigl(B_{\lambda_i}^{df}\bigr)
\bigr\}$.

With such matrix $B$, there is $\operatorname{Im}(X^{df}) \subseteq \operatorname{Im}(\widetilde{X})$, where $\widetilde{X}$ is determined by \eqref{eq14} and $(A,B)$ is controllable. Plus the matrix $B$ achieves the minimal rank among all matrices satisfying these conditions.
\end{theorem}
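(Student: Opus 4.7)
The plan is to establish the three claims of the theorem in sequence: (a) $\operatorname{Im}(X^{df})\subseteq\operatorname{Im}(\widetilde{X})$, (b) $(A,B)$ is controllable, and (c) the minimal-rank claim. Assertions (a) and (b) follow directly from the construction combined with Theorem \ref{theorem3} and Lemma \ref{lemma2}; assertion (c) is where the main effort lies.

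For (a), I would start from the crucial structural observation that, by construction, the last row of every $B_0^{j,df}$ is zero, so the $z_0$ columns of $B_{\text{last}}$ coming from $B^{df}$ all vanish. Hence every vector of the form $\hat{w}=(\hat{w}^{df\top},\,0)^\top$ with $\hat{w}^{df}\in\mathbb{R}^{z_0}$ lies in $\operatorname{Ker}(B_{\text{last}})=\operatorname{Im}(U_{B_{\text{last}}})$. Plugging such $\hat{w}$ into the formulas of Theorem \ref{theorem3} gives $x_r=A_r^{-1}B_r\hat{w}=A_r^{-1}(A_rX_r^{rf})\hat{w}^{df}=X_r^{rf}\hat{w}^{df}$, and for each $j=1,\dots,\alpha_0$ the bottom $r_j-1$ entries of $x_j$ equal $B_0^{j,df}(1:r_j-1)\hat{w}^{df}=X_j^{rf}(2:r_j)\hat{w}^{df}$; the free top entry of each $x_j$, contributed by the $U_A$ columns of $\widetilde{X}$, can be assigned to the remaining row $X_j^{rf}(1,:)\hat{w}^{df}$. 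Letting $\hat{w}^{df}$ range over $\mathbb{R}^{z_0}$ yields $\operatorname{Im}(X^{rf})\subseteq\operatorname{Im}(\widetilde{X})$, and combining with $\operatorname{Im}(U_A)\subseteq\operatorname{Im}(\widetilde{X})$ together with the decomposition in \eqref{res_desired} gives (a).

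For (b), the theorem selects $B^c$ so that $\operatorname{rank}([B_{\lambda_i}^{df}\,\,B_{\lambda_i}^c])=\alpha_i$ for every $i\in\mathcal{I}$; this is exactly the hypothesis of Lemma \ref{lemma2}, so $(A,B)$ is controllable.

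The main obstacle is (c). Here I would prove a matching lower bound: any matrix $B$ satisfying (a) and (b) must have rank at least $z_0+\max_{i\in\mathcal{I}}\{\alpha_i-\operatorname{rank}(B_{\lambda_i}^{df})\}$. The two ingredients are: first, covering the $z_0$-dimensional residual space $\operatorname{Im}(X^{rf})$ inside the $\operatorname{Ker}(B_{\text{last}})$-parameterised part of $\widetilde{X}$ forces at least $z_0$ linearly independent columns of $B$ to lie in $\operatorname{Ker}(B_{\text{last}})$; second, Lemma \ref{lemma2} requires $\operatorname{rank}(B_{\lambda_i})=\alpha_i$ at each eigenvalue, so any rank deficit beyond what the $z_0$ residual columns provide must be made up by additional columns. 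Since a single new column can raise the rank of several $B_{\lambda_i}$ blocks simultaneously, the additional count is bounded below by the maximum rather than the sum over $i$. I then verify that the explicit construction attains this lower bound: $B^{df}$ supplies $z_0$ linearly independent columns (by the linear independence of $X^{rf}$ and invertibility of $A_r$), and $B^c$ supplies exactly the stated number of further columns. The subtle point I expect to need the most care is rigorously separating \emph{residual-coverage} from \emph{controllability-only} columns in an arbitrary $B$, since the same column can in principle serve both roles; the natural device is to work modulo $\operatorname{Im}(U_A)$ and to count dimensions inside each Jordan-block row partition.
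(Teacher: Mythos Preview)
Your approach to (a) and (b) is essentially the paper's. For (a), the paper phrases it as setting $\widetilde{X}_L=X^{rf}$ and reading off the conditions $B_{\text{last}}=0$, $U_{B_{\text{last}}}=I$, $B_r=A_rX_r^{rf}$, $B_0^j(1{:}r_j{-}1)=X_j^{rf}(2{:}r_j)$; you are actually more careful than the paper in that you explicitly handle the $B^c$ columns by restricting to $\hat{w}=(\hat{w}^{df\top},0)^\top$, whereas the paper derives $B^{df}$ first and only afterward appends $B^c$, without revisiting whether $\widetilde{X}$ might shrink once $B^c$ is included. Part (b) is identical: both just invoke Lemma~\ref{lemma2}.

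For (c), you are attempting something stronger than what the paper actually proves. The paper's minimality argument is very short: it observes that $(A,B^{df})$ may fail controllability at some $\lambda_i$, so one must raise the rank by at least $\max_{i}\{\alpha_i-\operatorname{rank}(B_{\lambda_i}^{df})\}$, and then simply declares the resulting total $\operatorname{rank}(B^{df})+\max_{i}\{\alpha_i-\operatorname{rank}(B_{\lambda_i}^{df})\}$ to be the minimum. It does \emph{not} prove a lower bound valid for an arbitrary $B$ satisfying the two conditions; in particular it never rules out that a completely different choice of columns could simultaneously cover $\operatorname{Im}(X^{rf})$ and supply the controllability rank with fewer total columns. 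The ``subtle point'' you flag---separating residual-coverage columns from controllability-only columns when a single column can serve both roles---is exactly the gap the paper leaves open. So your plan for (c) is sound and more ambitious, but be aware that the paper does not supply the argument you are looking for; if you want a fully rigorous minimality proof you will have to complete that dimension-counting yourself.
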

\begin{proof}
In \eqref{eq14}, define matrix $\widetilde{X}_L$ such that $[\widetilde{X}_L\quad U_A]=\widetilde{X}.$
According to the definition of the residual desired formation space $\operatorname{Im}(X^{rf})$ in \eqref{res_desired}, in order to make $\operatorname{Im}(X^{df}) \subseteq \operatorname{Im}(\widetilde{X})$, we only need to make 
$\operatorname{Im}(X^{rf}) \subseteq \operatorname{Im}(\widetilde{X}_L)$.
Thus, we set $\widetilde{X}_L=X^{rf}$. Then by \eqref{eq14} and \eqref{X_rf}, we have
\begin{equation}
\widetilde{X}_L=\left[\begin{array}{{c}}
A_{r}^{-1}B_{r}U_{B_{\text{last}}}  \\
\widetilde{X}_1 \\
\widetilde{X}_2  \\  
\vdots  \\
\widetilde{X}_{\alpha_{0}}  
\end{array}\right]=X^{rf}= \begin{bmatrix}
X_r^{rf}\\
X_1^{rf}\\
X_2^{rf}\\
\vdots\\
X_{\alpha_0}^{rf}
\end{bmatrix},
\end{equation}
where 
\begin{equation}
\widetilde{X}_j\!=\!\begin{bmatrix}
0 \\
B_{0}^{j}\left(1:r_{j}-1\right) U_{B_{\text{last}}}  \end{bmatrix},\text { for } j=1,2, \cdots, \alpha_{0}.
\end{equation}
Without loss of generality, we can choose $B$ which satisfies
\begin{equation}\label{B_last}
B_{\text{last}} = 0,\,U_{B_{\text{last}}}=I,
\end{equation}
and 
\begin{equation}\label{B_r}
B_r =A_rX_r^{rf},    
\end{equation}
\begin{equation}\label{B_0i}
B_{0}^{j}\left(1: r_{j}-1\right)=X_{j}^{rf}\left(2:r_{j}\right).   
\end{equation}
From \eqref{B_last}, we have $B_{0}^{j}\left(r_{j}\right)=0$, for $j=1,2, \cdots \alpha_{0}$. 
And combining \eqref{B_r} and \eqref{B_0i}, we have
\begin{equation}\label{B^df}
B =\begin{bmatrix}
A_r\,X_r^{rf}\\
B_0^1\\
B_0^2\\
\vdots\\
B_0^{\alpha_0}
\end{bmatrix},B_0^j = \begin{bmatrix}
X_j^{rf}\,(2: r_j)\\
0
\end{bmatrix}, j=1,2, \cdots \alpha_{0}.
\end{equation}
Then $\operatorname{Im}(X^{df}) \subseteq \operatorname{Im}(\widetilde{X})$.

Then building on the condition that $\operatorname{Im}(X^{df}) \subseteq \operatorname{Im}(\widetilde{X})$, we further require that the pair $(A,B)$ is controllable. 
Define $B$ in \eqref{B^df} as $B^{df}$, which makes the desired formation $\operatorname{Im}(X^{df})$ belongs to $\operatorname{Im}(\widetilde{X})$.
According to Lemma \ref{lemma2}, for $(A,B^{df})$ to be controllable, it is necessary that $\operatorname{rank}(B^{df}_{\lambda_i})=\alpha_i, \forall i \in \mathcal{I}$, where $B^{df}_{\lambda_i}$ is defined in \eqref{Blambda}.
If $\exists \, i\in \mathcal{I}$ such that $ \alpha_i> \operatorname{rank}(B^{df}_{\lambda_i})$, then $(A,B^{df})$ is not controllable.
Thus we need to increase the rank of $B^{df}$ at least by $\max_{i\in \mathcal{I}} \{\alpha_i-\operatorname{rank}(B^{df}_{\lambda_i})\}$. This means the minimal rank of $B$ satisfying $\operatorname{Im}(X^{df}) \subseteq \operatorname{Im}(\widetilde{X})$ and $(A,B)$ controllable is 
\begin{equation}\label{eqrank}
\begin{aligned}
&\text{rank}(B^{df}) 
+ \max_{i \in \mathcal{I}} \{\alpha_i - \operatorname{rank}(B^{df}_{\lambda_i})\}.
\end{aligned}
\end{equation}
Let \(B^c\) be a matrix whose number of columns is $\max_{i\in \mathcal{I}} \{\alpha_i-\operatorname{rank}(B^{df}_{\lambda_i})\}$. Define $B=\begin{bmatrix}
B^{df} \quad  B^{c}
\end{bmatrix}$. We can always find \( B^{c}\) such that $\operatorname{rank}([B_{\lambda_i}^{df}\,\, B_{\lambda_i}^{c} ])=\alpha_i, \forall i\in \mathcal{I}$.
Hence $(A, B)$ is controllable. Since the rank of $B$ equals to \eqref{eqrank}, $B$ achieves the minimal rank among all matrices satisfying $\operatorname{Im}(X^{df}) \subseteq \operatorname{Im}(\widetilde{X})$ and that $(A, B)$ is controllable, which completes the proof.
\end{proof}
Then we consider some special cases for Theorem \ref{theorem4}, where $B^{df}$ in $B$ has a simpler expression.
\\(1) When the desired formation space $\operatorname{Im}(X^{df})$ belongs to $\operatorname{Im}(U_A)$, there is $X^{rf}=0$ and we have $B^{df}=0$.
\\(2) When \(A\) has no zero eigenvalues, we have \(B^{df}= AX^{df}\). 
\\(3) When all Jordan blocks associated with the zero eigenvalue of \(A\) are 1-dimensional, we have $ B^{df}=
\begin{bmatrix}
A_r X_{r}^{rf}\\0
\end{bmatrix}$.
\subsection{Step 2: design the exogenous system and the performance index}
For any desired formation space \(\operatorname{Im}(X^{df})\), an input matrix \(B\) can be designed with minimal rank such that \(\operatorname{Im}(X^{df}) \subseteq \operatorname{Im}(\widetilde{X})\) and the pair \((A, B)\) is controllable. Subsequently, to ensure that the system state converges to a specific formation \(\operatorname{Im}(x_{df})\) (with \(x_{df} \in \mathbb{R}^n\)) within the maximal steady-state space \(\operatorname{Im}(\widetilde{X})\), the exogenous system and the performance index must be appropriately designed.

It is important to note that the design of the exogenous system is not unique; the same formation may be achieved through different control strategies implemented by various exogenous systems, as illustrated in Case 3 of Section 5 (Illustrative examples). This flexibility prevents unauthorized parties from deducing the desired formation based on historical control inputs, thereby enhancing system security. Here, we propose a design that prioritizes minimizing the dimension of the exogenous system.

\begin{theorem}\label{theorem5}(Design exogenous system and performance index).
Consider the optimal control problem \eqref{eq_optimal}, where matrices $A\in \mathbb{R}^{n \times  n}$ and $B\in \mathbb{R}^{n \times  m}$ are given as \eqref{A_rB_r}. 
Consider a desired formation vector 
$x_{df}\neq0 \in \mathbb{R}^n $ such that $\operatorname{Im}(x_{df}) \in \operatorname{Im}(\widetilde{X})\setminus \{ V_{+}(A),V_{mg(0)\geq 2}(A)\}$,
where $\widetilde{X}$ is defined in \eqref{eq14}, $V_{+}(A)$ and $V_{\mathrm{mg}(0)\ge 2}(A)$ denote, respectively, the sets of eigenvectors of $A$ corresponding to eigenvalues with positive real parts and those associated with the zero eigenvalue having Jordan blocks of size at least 2. 
Then the control strategy described by \eqref{eq4} and \eqref{eq9} can drive the state of the system \eqref{eq1} to $\operatorname{Im}\,(x_{df})$ under the following exogenous system and the performance index matrix $C$:
\vspace{-5pt} 
\begin{subequations}
\begin{equation}\label{H_design}
H = U_{ B_{last}}\in \mathbb{R}^{m \times  k},
\end{equation}
\begin{equation}\label{F1F2}
[F_1~~ F_2] =[0_{k \times n} \; 0_{k \times k}],
\end{equation}
\begin{equation}\label{KG}
\{K\in \mathbb{R}^{m \times m}, G\in \mathbb{R}^{k \times  m} \| 
(\begin{bmatrix}
A & BH \\
F_1 & F_2 
\end{bmatrix}\!,\! 
\begin{bmatrix}
BK \\
G 
\end{bmatrix}) 
\text{ is stabilizable} \},
\end{equation}
\begin{equation}\label{C_design}
C = [U_{x_{df}^\top}]^\top \in \mathbb{R}^{(n-1) \times  n},
\end{equation}
\end{subequations}
where $B_{\text{last}}$ is defined in \eqref{Blast}, $U_{B_{\text{last}}}$ and $U_{x_{df}^\top}$ are matrices whose columns form a standard orthonormal basis for $\operatorname{Ker}(B_{\text{last}})$ and $\operatorname{Ker}(x_{df}^\top)$.
\end{theorem}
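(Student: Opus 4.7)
The plan is to verify that the constructed $(\bar{A},\bar{B},\bar{C})$ satisfies Assumptions~\ref{assump2}--\ref{assump1} so that Lemma~\ref{the1} and Proposition~\ref{theorem0} apply, and then to read off the steady state from $\operatorname{Ker}([\bar{A}^{\top}\ \bar{C}^{\top}]^{\top})$. Assumption~\ref{assump2} is essentially free: the set in \eqref{KG} is defined precisely as those $(K,G)$ making $(\bar{A},\bar{B})$ stabilizable, and since $(A,B)$ is controllable under the Step~1 construction of Theorem~\ref{theorem4}, this set is nonempty. The substance of the argument is in verifying Assumption~\ref{assump1}.

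For the first half of Assumption~\ref{assump1}, I would check that $0$ is the only undetectable eigenvalue of $\bar{A}$ for $(\bar{C},\bar{A})$. Because $F_{1}=0$, $F_{2}=0$ and $H=U_{B_{\text{last}}}$, $\bar{A}$ is block upper triangular with diagonal blocks $A$ and $0_{k\times k}$, so any eigenvector $(v_{1}^{\top},v_{2}^{\top})^{\top}$ of $\bar{A}$ for a nonzero $\lambda$ must satisfy $v_{2}=0$ and $Av_{1}=\lambda v_{1}$. Unobservability forces $Cv_{1}=0$, i.e.\ $v_{1}\in\operatorname{Ker}(C)=\operatorname{Im}(x_{df})$, so $x_{df}$ itself would be an eigenvector of $A$ for $\lambda$. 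The exclusion $x_{df}\notin V_{+}(A)$ rules out $\operatorname{Re}(\lambda)>0$, and since a real $A$ admits no real eigenvector for a non-real eigenvalue, any $\lambda\neq 0$ with $\operatorname{Re}(\lambda)\ge 0$ is impossible. Hence the only undetectable eigenvalue is $0$.

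For the multiplicity half of Assumption~\ref{assump1} (which, as used in the proof of Lemma~\ref{theorem1}, amounts to asking that every Jordan chain of $\bar{A}$ at $0$ inside the unobservable subspace $\mathcal{N}$ has length one), I would argue by contradiction. A length-$\ge 2$ chain would give $(u_{1},u_{2})\in\mathcal{N}$ and a nonzero $(v_{1},v_{2})=\bar{A}(u_{1},u_{2})\in\operatorname{Ker}(\bar{A})\cap\mathcal{N}$. The bottom block of $\bar{A}(u_{1},u_{2})$ is $0$, forcing $v_{2}=0$, and then $(v_{1},0)\in\operatorname{Ker}(\bar{A})\cap\operatorname{Ker}(\bar{C})$ yields $v_{1}\in\operatorname{Ker}(A)\cap\operatorname{Im}(x_{df})$. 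If $x_{df}\notin\operatorname{Ker}(A)$, this intersection is $\{0\}$, contradicting $v_{1}\neq 0$. If $x_{df}\in\operatorname{Ker}(A)$, then $v_{1}=\alpha x_{df}$ with $\alpha\neq 0$; combining the top-block equation $Au_{1}+BU_{B_{\text{last}}}u_{2}=\alpha x_{df}$ with $u_{1}\in\operatorname{Im}(x_{df})$ and $x_{df}\in\operatorname{Ker}(A)$ reduces to $\alpha x_{df}\in\operatorname{Im}(BU_{B_{\text{last}}})$. Since $\operatorname{Im}(BU_{B_{\text{last}}})\subseteq\operatorname{Im}(A)$ (the columns of $BU_{B_{\text{last}}}$ have zero entries in each ``last-row'' slot of the zero-eigenvalue Jordan blocks of $A$ by the definition of $B_{\text{last}}$, as already used in the proof of Theorem~\ref{theorem3}), this puts $x_{df}\in\operatorname{Ker}(A)\cap\operatorname{Im}(A)=V_{\mathrm{mg}(0)\ge 2}(A)$, contradicting the excluded case.

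With both assumptions in hand, Lemma~\ref{the1} yields the optimal control \eqref{eq9} and Proposition~\ref{theorem0} gives $\bar{x}_{ss}\in\operatorname{Ker}([\bar{A}^{\top}\ \bar{C}^{\top}]^{\top})$. Writing $\bar{x}_{ss}=(x_{ss}^{\top},w_{ss}^{\top})^{\top}$, the equation $\bar{C}\bar{x}_{ss}=Cx_{ss}=0$ immediately gives $x_{ss}\in\operatorname{Ker}(C)=\operatorname{Im}(x_{df})$, as required. The main obstacle is the Jordan-block analysis in the multiplicity step: one must work carefully with the unobservable subspace $\mathcal{N}$ rather than all of $\operatorname{Ker}(\bar{C})$, and combine the structural fact $\operatorname{Im}(BU_{B_{\text{last}}})\subseteq\operatorname{Im}(A)$ with both exclusions on $x_{df}$ in order to rule out length-$\ge 2$ chains at $0$ inside $\mathcal{N}$.
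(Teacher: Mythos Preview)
Your proposal is correct and follows essentially the same approach as the paper: verify Assumptions~\ref{assump2}--\ref{assump1}, then invoke Proposition~\ref{theorem0} and read off $x_{ss}\in\operatorname{Ker}(C)=\operatorname{Im}(x_{df})$. The key ingredients---that $\operatorname{Ker}(C)=\operatorname{Im}(x_{df})$, that the bottom block of $\bar{A}$ vanishes so any generalized eigenvector forces $v_{2}=0$, and that $\operatorname{Im}(BU_{B_{\text{last}}})\subseteq\operatorname{Im}(A)$ combined with the exclusions on $x_{df}$ kills length-$\ge 2$ chains---are exactly those in the paper's Parts~2--3; you simply organize the multiplicity argument as a case split on whether $x_{df}\in\operatorname{Ker}(A)$, whereas the paper first pins down $\operatorname{Ker}([\bar{A}^{\top}\ \bar{C}^{\top}]^{\top})$ and then deduces $Ax_{df}=0$ from $w_{0}=0$. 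One small omission: you do not explicitly confirm that $0$ \emph{is} undetectable (the first clause of Assumption~\ref{assump1}); this follows immediately from $x_{df}\in\operatorname{Im}(\widetilde{X})$ and the definition of $\widetilde{X}$, which furnish a $w_{0}$ with $Ax_{df}+BHw_{0}=0$.
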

\begin{proof}
The proof is presented in three parts.

\textbf{Part 1:} This step shows the design 
of $H$ in \eqref{H_design} ensures that any formation in $\operatorname{Im}(\widetilde{X})$ is possible to be achievable.
The effect of $H$ is reflected in 
$B H w_{ss} = - A x_{ss}$.
Since $B$ is full column rank, we have
\begin{equation}\label{H_eq}
H w_{ss} = - (B^\top B)^{-1} B^\top A x_{ss}.
\end{equation}
If $H$ is chosen such that $\forall x_{ss} \in \operatorname{Im}(\widetilde{X})$, \eqref{H_eq} is satisfied, then $x_{ss}$ is possible to be achievable. Thus $H$ should satisfy
$- (B^\top B)^{-1} B^\top A\widetilde{X} \subseteq \operatorname{Im}(H)$.
When 
$\operatorname{Im}(H) = \operatorname{Im}((B^\top B)^{-1} B^\top A\widetilde{X})$,
$H$ achieves the minimal rank. Considering $AU_A=0$, we have
\begin{equation}
A \widetilde{X} =
\begin{bmatrix}
A_{r} &           &            &            &           \\
      & J_{1}(0)  &            &            &           \\
      &          & J_{2}(0)   &            &           \\
      &          &            & \ddots     &           \\
      &          &            &            & J_{\alpha_0}(0)
\end{bmatrix}
\begin{bmatrix}
A_r^{-1} B_r U_{B_{last}}\\
\widetilde{X}_1\\
\widetilde{X}_2\\
\vdots\\
\widetilde{X}_{\alpha_0}
\end{bmatrix}.
\end{equation}
Compute
\begin{equation}
J_{i}(0) \widetilde{X}_i = J_{i}(0)
\begin{bmatrix}
0 \\
B_{0}^{i}\left(1:r_{i}-1\right) U_{B_{\text{last}}}  \end{bmatrix}= \begin{bmatrix}
B_{0}^{i}\left(1:r_{i}-1\right) U_{B_{\text{last}}} \\0 \end{bmatrix}=B_{0}^{i}U_{B_{\text{last}}},
\end{equation}
for $i=1,2,\dots,\alpha_0$, where we use $B_{0}^{\,i}(r_i) U_{B_{last}} = 0.$
Then we have
$A \widetilde{X} = B \,U_{B_{last}}$.
Finally, we have
$\mathrm{Im}(H) = \operatorname{Im}((B^\top B)^{-1} B^\top A\widetilde{X})=
\mathrm{Im}\bigl((B^\top B)^{-1} B^\top B \,U_{B_{last}}\bigr)
= \mathrm{Im}(U_{B_{last}})$.
Since \(U_{B_{last}}\) is the matrix whose columns form a standard orthonormal basis for \(\operatorname{Ker}(B_{last})\), choosing \(H = U_{B_{last}}\) minimizes the number of columns in \(H\). As the dimension of the exogenous system is equal to the number of columns in \(H\), this choice makes the exogenous system have the minimal dimension.

\textbf{Part 2:}
In this step, we demonstrate that our design of $F_1$, $F_2$, and $C$ in \eqref{F1F2} and \eqref{C_design} renders $\begin{bmatrix}x_{df} \\ w_0\end{bmatrix}$ the unique (up to a scalar multiple) nonzero vector in the steady-state space $\operatorname{Ker}(\bar{A}_{cl})$, where $w_0$ satisfies $Ax_{df} + BHw_0 = 0$.
By Theorem \ref{theorem0}, $\operatorname{Ker}(\bar{A}_{cl}) = \operatorname{Ker}( \left[\begin{array}{ll}
\bar{A}^\top \,\, \bar{C}^\top
\end{array}\right]^\top)$. Therefore, it suffices to prove that $\operatorname{Ker}( \left[\begin{array}{ll}
\bar{A}^\top \,\, \bar{C}^\top
\end{array}\right]^\top)=\operatorname{Im}\,(\begin{bmatrix}
x_{df} \\
w_0
\end{bmatrix})$. 
Define $\begin{bmatrix}
x \\
w
\end{bmatrix} 
\in \operatorname{Ker}( \left[\begin{array}{ll}
\bar{A}^\top \,\, \bar{C}^\top
\end{array}\right]^\top)$, $x \in \mathbb{R}^n$, $w \in \mathbb{R}^k$. When \([F_1~ F_2] = [0 \; 0]\), $C=[U_{x_{df}^\top}]^\top$, we have
\begin{subequations}\label{eq:line}
\begin{equation}\label{eq:line1}
A x + B Hw = 0,
\end{equation}
\begin{equation}\label{eq:line2}
[U_{x_{df}^\top}]^\top x = 0 .
\end{equation}
\end{subequations}
Because $U_{x_{df}^\top}$ is a matrix whose columns form a standard orthonormal basis for $\operatorname{Ker}(x_{df}^\top)$, the unique (up to a scalar multiple) nonzero solution of \eqref{eq:line2} is $x_{df}$. Thus if $w_0$ satisfies $A x_{df} + B H w_0 = 0$,  
$\begin{bmatrix}
x_{df} \\
w_0
\end{bmatrix}$ is one solution for \eqref{eq:line}.  
Another possible solution is when $x= 0, \exists w \neq 0$ which satisfies $B Hw=0$. But such $w$ does not exist because $B$ and $H$ are full column rank matrices.
Thus $\operatorname{Ker}( \left[\begin{array}{ll}
\bar{A}^\top \,\, \bar{C}^\top
\end{array}\right]^\top)=\operatorname{Im}\,(\begin{bmatrix}
x_{df} \\
w_0
\end{bmatrix})$.

\textbf{Part 3:} 
This step shows that Assumptions \ref{assump2} and \ref{assump1} are satisfied. Because $K$ and $G$ in \eqref{KG} ensure that the system is stabilizable, Assumption \ref{assump2} is satisfied. 
Then we prove Assumptions \ref{assump1}. If there exists an undetectable eigenvalue $\lambda$ for $\bar{A}$, then $\exists \phi \neq0$ which satisfies 
$\bar{A}\phi=\lambda\phi,~  \bar{C}\phi=0$.
Because $\bar{A} = \begin{bmatrix}
    A & BH\\
    0 & 0
\end{bmatrix}, \,\, \bar{C} = \begin{bmatrix}
    C & 0
\end{bmatrix}$, define $\phi = \begin{bmatrix} \phi_1\\ \phi_2 \end{bmatrix}$, then we have
\begin{equation}\label{con1}
A \phi_1 + B H \phi_2 = \lambda \phi_1,
\end{equation}
\begin{equation}\label{con2}
\lambda \phi_2 = 0,
\end{equation}
\begin{equation}\label{con3}
C\phi_1 = 0.
\end{equation}
Because $C =[U_{x_{df}^\top}]^\top$ in \eqref{con3}, we have 
\begin{equation}\label{con4}
\phi_{1} =x_{df} \text{ or } 0.
\end{equation}
When $\lambda \neq 0$, because of \eqref{con2}, $\phi_2 = 0$. Because $\phi \neq 0$, $\phi_1$ can only be $x_{df}$ to satisfy \eqref{con4}. Then \eqref{con1} becomes $A x_{df}= \lambda x_{df}$. 
Because $x_{df}$ can not be chosen from $V_{+}(A)$, $\mathrm{Re}(\lambda) \leq 0$. Thus $
\bar{A}$ has no undetectable eigenvalue with positive real part.

When $\lambda =0$, we have:
\begin{equation}\label{con5}
A \phi_1 + B H \phi_2 =0,
\end{equation}
\begin{equation}
C\phi_1 = 0.
\end{equation}
Thus $\phi_1$ also satisfies \eqref{con4}. If $\phi_1 = 0$, because $B H$ is column full rank in \eqref{con5}, we have $\phi_2 = 0$, which contradicts $\phi \neq 0$. Thus we have $\phi_1 =x_{df}$.   
Because $x_{df} \in \operatorname{Im}(\widetilde{X})$, there must exist $\phi_2$ satisfying $A x_{df} + B H\phi_2 = 0$. Define $\phi_2=w_0$, then $\bar{A}$ must have undetectable eigenvalue $0$ with eigenvector
$\begin{bmatrix}
    x_{df}\\ w_0
\end{bmatrix}$. 
Then we need to prove the algebraic multiplicity and geometric multiplicity are equal for the undetectable eigenvalue $0$. If the condition is not satisfied, $\exists \hat{\phi} \neq 0$ such that $\bar{A}\hat{\phi}=\phi$ and $\bar{C} \hat{\phi}= 0$, where $\phi$ satisfies $\bar{A}\phi=0$ and $\bar{C}\phi= 0$. Because $\operatorname{Ker}( \left[\begin{array}{ll}
\bar{A}^\top \,\, \bar{C}^\top
\end{array}\right]^\top)=\operatorname{Im}\,(\begin{bmatrix}
x_{df} \\
w_0
\end{bmatrix})$ as proved in \textbf{Part 2},
$\phi$ can only be $\begin{bmatrix}
    x_{df}\\ w_0
\end{bmatrix}$, where $w_0$ satisfies 
\begin{equation}\label{con_w0}
A x_{df} + B Hw_0 = 0.
\end{equation}
Define $\hat{\phi}=\begin{bmatrix}\hat{\phi}_{1}\\ \hat{\phi}_{2}
\end{bmatrix}$, then we have
\begin{equation}\label{ncon1}
A \hat{\phi}_{1} + BH \hat{\phi}_{2} = x_{df},
\end{equation}
\begin{equation}\label{ncon2}
w_{0} = 0,
\end{equation}
\begin{equation}\label{ncon3}
C\hat{\phi}_{1} = 0.
\end{equation}
Because $C =[U_{x_{df}^\top}]^\top$ in \eqref{ncon3},  we have 
\begin{equation}\label{phi12}
\hat{\phi}_{1} =x_{df} \text{ or } 0.
\end{equation}
Because of \eqref{con_w0} and \eqref{ncon2}, we have 
\begin{equation}\label{x_df}
A x_{df}= 0.
\end{equation}
Then we take \eqref{phi12} into \eqref{ncon1} and get 
\begin{equation}\label{con_z2}
B H \hat{\phi}_2
=
x_{df}.
\end{equation}
Because $H = U_{B_{last}}$, we can prove that $\mathrm{Im}(B H) \subseteq \mathrm{Im}(A)$. 
Then $\exists\, \varphi \in \mathbb{R}^{n}$ which satisfies $B H\hat{\phi}_2 = A\varphi$. Considering \eqref{con_z2}, $A\varphi=x_{df}$. Because $x_{df}$ is the eigenvector of 0 for $A$ by \eqref{x_df}, $\varphi$ is a generalized-eigenvector of $0$ for $A$. Because $x_{df}$ cannot be chosen from $V_{\mathrm{mg}(0)\ge 2}(A)$, such $\varphi$ does not exist. Thus, there does not exist $\hat{\phi}_2$, which indicates that $\bar{A}$ does not have any generalized-eigenvector of 0 corresponding to undetectable subspace. Therefore, Assumption \ref{assump1} is satisfied. 

Based on the proofs in \textbf{Parts 1–3}, we conclude that $\bar{A}_{cl}$ is marginally stable and
the state of the closed-loop system \eqref{closeloop} converge to $\operatorname{Im}\,(\begin{bmatrix}
x_{df} \\
w_0
\end{bmatrix})$. 
Consequently, the state of system \eqref{eq1} will converge to $\operatorname{Im}\,(x_{df})$. This completes the proof.
\end{proof}
\begin{remark}
In Theorem \ref{theorem5}, we set \(H = U_{B_{last}}\). It is possible when \(\operatorname{Ker}(B_{last}) = \{0\}\). In this case, \(\operatorname{Im}(\widetilde{X}) = \operatorname{Im}(U_A) = \operatorname{Ker}(A)\), which means the steady state is driven at most towards the null space of the matrix $A$. Actually, such formation can be achieved by the original optimal control problem~\eqref{origin_op}, which does not involve an exogenous system. Actually, Theorem \ref{theorem5} is compatible with this case, because the exogenous system design in Theorem \ref{theorem5} is \(H = F_1 = F_2 = G = 0 \in \mathbb{R}\), \(\{K \in \mathbb{R}^{m \times m} \mid (A, BK) \text{ is stabilizable}\}\), and \(C = [U_{x_{df}^\top}]^\top\). If \((A, B)\) is controllable, we may choose \(K = I_m\). With this design, the optimal control problem with the exogenous system~\eqref{eq_optimal} reduces to the original optimal control problem~\eqref{origin_op}.
\end{remark}

\subsection{Step 3: design the initial state of the exogenous system}
Based on Lemma~\ref{theorem1}, the steady state of the optimal control problem \eqref{eq_optimal} is related to the initial state of the combined system, \(\bar{x}(0)\). Typically, the initial state of the multi-agent system, \(x(0)\), is fixed, whereas the initial state of the exogenous system, \(w(0)\), can be any bounded value.
Hence, by selecting an appropriate \(w(0)\), one can achieve the desired formation scaling. The following theorem describes the method for determining $w(0)$.

\begin{algorithm}[h]
    \caption{System design to achieve a desired formation $d\cdot x_{df}$}
\begin{algorithmic}
    \STATE {\textbf{Input} System matrix $A$, collection of desired formations $ \operatorname{Im}\, (X^{df})$, desired formation with scaling $d\cdot x_{df}$, initial state $x(0)$ and input matrix $B$ (may be omitted).}
    \\ \textbf{Step 1: Construct $B$ if not provided.}
    \STATE {$B\leftarrow$ getInputMatrixB()}
    \IF{$B$ is not provided}
    \STATE { Compute $\operatorname{Im}(X^{rf})=\operatorname{Im}(X^{df})\cap(\operatorname{Im}(U_A))^{\perp}$;}
     \STATE {Compute $B^{df}$ and $B^{c}$ according to Theorem \ref{theorem4};}
    \STATE {Construct $B=\left[  B^{df} \quad  B^{c} \right].$}
      \RETURN $B$
        \ELSE 
        \STATE {Skip constructing $B$.}
\ENDIF
\\ \textbf{Step 2: Construct the exogenous system and the performance index.}
\STATE {Compute $\widetilde{X}$ according to \eqref{eq14}.}
    \IF {$\operatorname{Im} (x_{df}) \subseteq \operatorname{Im} (\widetilde{X})\setminus \{ V_{+}(A),V_{mg(0)\geq 2}(A)\}$}
    \STATE {Construct $H= U_{B_{last}}$}.
    \STATE {Construct matrix $[F_1 \; F_2]=[0_{k \times n}\,\, 0_{k \times k}]$}.
       \STATE {Construct matrices $K$ and $G$ such that $(\bar{A}, \bar{B})$ is stabilizable}.
    \STATE {Construct matrix $C =[U_{x_{df}^\top}]^\top.$}
\RETURN $H, F_1, F_2, K, G$ and $C$.
    \ELSE 
   \STATE{Terminate algorithm with message: \textit{The formation $d \cdot x_{df}$ cannot be achieved.}}
    \ENDIF
\\ \textbf{Step 3: Construct the initial state $w(0)$.}
\STATE {Compute the closed-loop matrix $\bar{A}_{cl}$ and $\phi_{x},\phi_{w}$}.
    \IF{$\phi_{w}\neq 0$ or $d\frac{\left\|x_{df}\right\|}{\|\phi_x\|}=\phi_{x}^{\top}x(0)$}
\STATE $w(0)$ is selected satisfying: $\phi_{w}^{\top} w(0)
=d\frac{\left\|x_{df}\right\|}{\|\phi_x\|}-\phi_{x}^{\top}x(0)$.
\RETURN $w(0)$.
    \ELSE
   \STATE {Terminate algorithm with message: \textit{The formation $d\cdot x_{df}$ cannot be achieved.}}
\ENDIF
\end{algorithmic} \label{alg:1}
\end{algorithm} 

\begin{theorem}\label{scaling}(Design the initial state of the exogenous system).
Suppose the exogenous system and the performance index are designed such that the state of system \eqref{eq1} will converge to $\operatorname{Im}\,(x_{df})$ under the control strategy described by \eqref{eq4} and \eqref{eq9}, where 
$x_{df} \in  \mathbb{R}^{n}$. 
Then the closed-loop matrix $\bar{A}_{\mathrm{cl}}$ has only one eigenvector $\phi \in \mathbb{R}^{(n+k) \times 1}$ of the zero eigenvalue, which satisfies $\bar{A}_{cl}\phi=0$ and $\phi^{\top}\phi=1$.
Denote $\phi = 
\begin{bmatrix}
\phi_{x}\\[6pt]
\phi_{w}
\end{bmatrix}$,
where $\phi_{x} \in \mathbb{R}^{ n},\;
\phi_{w}\in \mathbb{R}^{k}$.
Given the initial state of the multi-agent system \(x(0)\) and a desired formation scaling $d\cdot x_{df}$ with $d \neq 0$, 
this scaling is achieved when the initial state of the exogenous system $w(0)$ satisfies
\begin{equation}\label{w_0}
\phi_{w}^{\top} w(0)
=d\frac{\left\|x_{df}\right\|}{\|\phi_x\|}-\phi_{x}^{\top}x(0),
\end{equation}
where $\left\|x_{df}\right\|$ and $\|\phi_x\|$ represent the norms of the vectors $x_{df}$ and $\phi_x$ respectively.
\end{theorem}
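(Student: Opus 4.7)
The plan is to combine Lemma~\ref{theorem1}, which expresses $\bar{x}_{ss}$ as a sum over zero-eigenvalue eigenvectors of $\bar{A}_{cl}$, with the one-dimensional kernel description produced in Part~2 of the proof of Theorem~\ref{theorem5}, and then reduce the vector equation $x_{ss}=d\cdot x_{df}$ to a single scalar constraint on $w(0)$.

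First, I would invoke Theorem~\ref{theorem5}: Assumptions~\ref{assump2}--\ref{assump1} hold, and Part~2 of its proof shows that $\operatorname{Ker}(\bar{A}_{cl})=\operatorname{Im}\bigl([x_{df}^\top\;w_0^\top]^\top\bigr)$, which is one-dimensional. Hence the algebraic multiplicity $s$ of the zero eigenvalue of $\bar{A}_{cl}$ equals one, and the unit-norm eigenvector $\phi$ is unique up to sign. Applying Lemma~\ref{theorem1} with $s=1$ and $\bar{x}(0)=[x(0)^\top\;w(0)^\top]^\top$ yields
\[
\bar{x}_{ss}=\bigl(\phi^\top\bar{x}(0)\bigr)\phi=\bigl(\phi_x^\top x(0)+\phi_w^\top w(0)\bigr)\phi,
\]
whose upper block is $x_{ss}=\bigl(\phi_x^\top x(0)+\phi_w^\top w(0)\bigr)\phi_x$.

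Next, since $\phi$ spans the same one-dimensional kernel as $[x_{df}^\top\;w_0^\top]^\top$, there is a nonzero scalar $c$ with $\phi_x=c\,x_{df}$. Taking norms and choosing the sign of $\phi$ appropriately, one obtains $x_{df}=(\|x_{df}\|/\|\phi_x\|)\,\phi_x$. Substituting this into the target condition $x_{ss}=d\cdot x_{df}$ and cancelling the common nonzero factor $\phi_x$ yields the scalar equation
\[
\phi_x^\top x(0)+\phi_w^\top w(0)=d\,\frac{\|x_{df}\|}{\|\phi_x\|},
\]
which is exactly a rearrangement of~\eqref{w_0}. Thus any $w(0)\in\mathbb{R}^{k}$ satisfying~\eqref{w_0} drives the multi-agent state to the desired scaled formation $d\cdot x_{df}$.

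The only subtle step is the identification $\phi_x\propto x_{df}$: it relies on the one-dimensional kernel of $\bar{A}_{cl}$ having a nonzero first block, which is guaranteed by $x_{df}\neq 0$, together with a consistent sign choice for $\phi$ so that $\|x_{df}\|/\|\phi_x\|>0$ serves as the proportionality constant. A final observation worth recording is the feasibility of~\eqref{w_0}: it admits a solution $w(0)\in\mathbb{R}^{k}$ whenever $\phi_w\neq 0$, and when $\phi_w=0$ it is solvable precisely when $d\,\|x_{df}\|/\|\phi_x\|=\phi_x^\top x(0)$, consistent with the feasibility test already built into Algorithm~\ref{alg:1}.
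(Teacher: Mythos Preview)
Your proposal is correct and follows essentially the same route as the paper: both apply Lemma~\ref{theorem1} with $s=1$ to obtain $x_{ss}=(\phi_x^\top x(0)+\phi_w^\top w(0))\phi_x$, then use the proportionality $\phi_x\propto x_{df}$ to reduce $x_{ss}=d\,x_{df}$ to the scalar condition~\eqref{w_0}. Your version is slightly more explicit about the sign convention for $\phi$ and the feasibility of~\eqref{w_0}, but the argument is the same.
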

\begin{proof}
Because the closed-loop matrix $\bar{A}_{\mathrm{cl}}$ has only one eigenvector $\phi$ of the zero eigenvalue, according to Lemma~\ref{theorem1}, 
\begin{align}
\bar{x}_{ss}= (\phi^\top \bar{x}(0)) \phi=(\phi_{x}^{\top}x(0)+\phi_{w}^{\top}w(0))\phi.
\end{align}
Considering $\bar{x}_{ss}=\begin{bmatrix} x_{ss} \\ w_{ss} \end{bmatrix}$ and $\phi = 
\begin{bmatrix}
\phi_{x}\\[6pt]
\phi_{w}
\end{bmatrix}$, we have 
\begin{align}
x_{ss}=(\phi_{x}^{\top}x(0)+\phi_{w}^{\top}w(0))\phi^x.
\end{align}
Because $w(0)$ satisfies \eqref{w_0}, there is
\begin{equation}
x_{ss}=d\frac{\left\|x_{df}\right\|}{\|\phi^x\|}\cdot \phi^x=d\cdot x_{df},
\end{equation}
the above equation holds because \(x_{df}\) and \(\phi^x\) are linearly dependent. Therefore, the scaling is achieved and the proof is complete.
\end{proof}

Algorithm 1 summarizes the design of system to achieve a desired formation $d\cdot x_{df}$, which includes three steps. In the first step, the algorithm checks whether an input matrix $B$ is provided. If not, it then constructs $B$ according to Theorem~\ref{theorem4}, which lets $\operatorname{Im}(X^{df}) \subseteq \operatorname{Im}(\widetilde{X})$ and $(A,B)$ controllable. If $B$ is provided, the algorithm skips constructing $B$. 
In the second step, it construct the exogenous system and
performance index according to Theorem \ref{theorem5} if $\operatorname{Im}(x_{df}) \in \operatorname{Im}(\widetilde{X})\setminus \{ V_{+}(A),V_{mg(0)\geq 2}(A)\}$. Otherwise, it terminates, stating that \textit{“The formation $d\cdot x_{df}$ cannot be achieved”}. 
In the third step, the algorithm checks if $\phi_w \neq 0$ or $d\frac{\left\|x_{df}\right\|}{\|\phi_x\|}=\phi_{x}^{\top}x(0)$. If true, it computes the initial state $w(0)$ according to Theorem \ref{scaling}. Otherwise, the algorithm terminates with the message: \textit{“The formation $d\cdot x_{df}$ cannot be achieved.”}
\section{Illustrative examples}
In this section, we will illustrate the effectiveness of our main results. Consider the state evolution equation of four agents in three-dimensional space
\begin{equation}\label{example1}
\dot{\hat{x}}=\hat{A}\hat{x}+\hat{B} u=(-L \otimes I_{3})\hat{x}+\hat{B} u,
\end{equation}
where $L = \begin{bmatrix}
2  & -1 & 0  & -1 \\
-1 & 3  & -2 & 0  \\
0  & -2 & 5  & -3 \\
-1 & 0  & -3 & 4
\end{bmatrix}$. We introduce a linear transformation to bring $\hat{A}$ into its Jordan normal form $A$, 
$A=\operatorname{diag}(0,-2.4746,-3.3691,-8.1563)\otimes I_{3}$,
whose all Jordan blocks associated with the zero eigenvalue are 1-dimensional. 
Under the Jordan normal form, we use theories presented in the aforementioned paper to compute required matrices and states. 
Then we apply an inverse linear transformation to obtain the corresponding matrices and states for system \eqref{example1}. It is worth noting that the exogenous system (including the state $w$ and the matrices $H, F_1, F_2, K,G$) remain unchanged before and after the linear transformation. This can be interpreted as the exogenous system undergoing an identity transformation.
For simplicity, we will directly present the results for system \eqref{example1}.

\textbf{Case 1} is an example where only parts of the agents are controlled. Consider $\hat{B}\!=\left[\begin{array}{c}
I_{9} \\
0_{3 \times 9}
\end{array}\right]$, where only agents 1-3 are controlled directly. We compute the maximum steady-state space $\operatorname{Im} (\widetilde{X})$ according to equation \eqref{eq14} and select a desired formation within $\operatorname{Im} (\widetilde{X})$. The desired formation for system \eqref{example1} is
\begin{equation*}
x_{df}^1=\begin{bmatrix}
1 & 1 & 0 & 10/3 & 10/3 & 0 & 7/3 & 7/3 & 0 & 2 ~ 2 ~ 0\end{bmatrix}^\top,
\end{equation*}
which corresponds to a line formation. The initial state of system \eqref{example1} is selected arbitrarily as $\hat{x}(0)=\left[\begin{array}{llllllllllll} 1.5&1&0.5& 3.3&3&0.3&3&2&0.2&2&1.5&0\end{array}\right]^{\top}$. Then we use Algorithm 1 to design the exogenous system, the performance index matrix $C$ and the initial state of exogenous system, where $w(0)\approx \left[\begin{array}{ll}0.2684 & 0\end{array}\right]^{\top}.$ Under these conditions, the evolution of the states and the trajectories of agents are shown in Fig. \ref{fig:1-1} and Fig. \ref{fig:1-2} respectively. As shown in Fig. \ref{fig:1-1}, the states of the agents tend to stabilize, and for sufficiently large \(t\) we have $\hat{x}\approx \left[\begin{array}{llllllllllll}1 & 1 & 0 & 3.3 & 3.3 & 0 & 2.3 & 2.3 & 0 & 2 & 2 & 0
\end{array}\right]^{\top}\approx x_{df}^1$. Fig. \ref{fig:1-2} shows the trajectories of agents, where the agents ultimately form a line formation. 
\begin{figure}[h]
  \centering
  \begin{minipage}{0.5\linewidth}
    \centering
    \includegraphics[width=\linewidth]{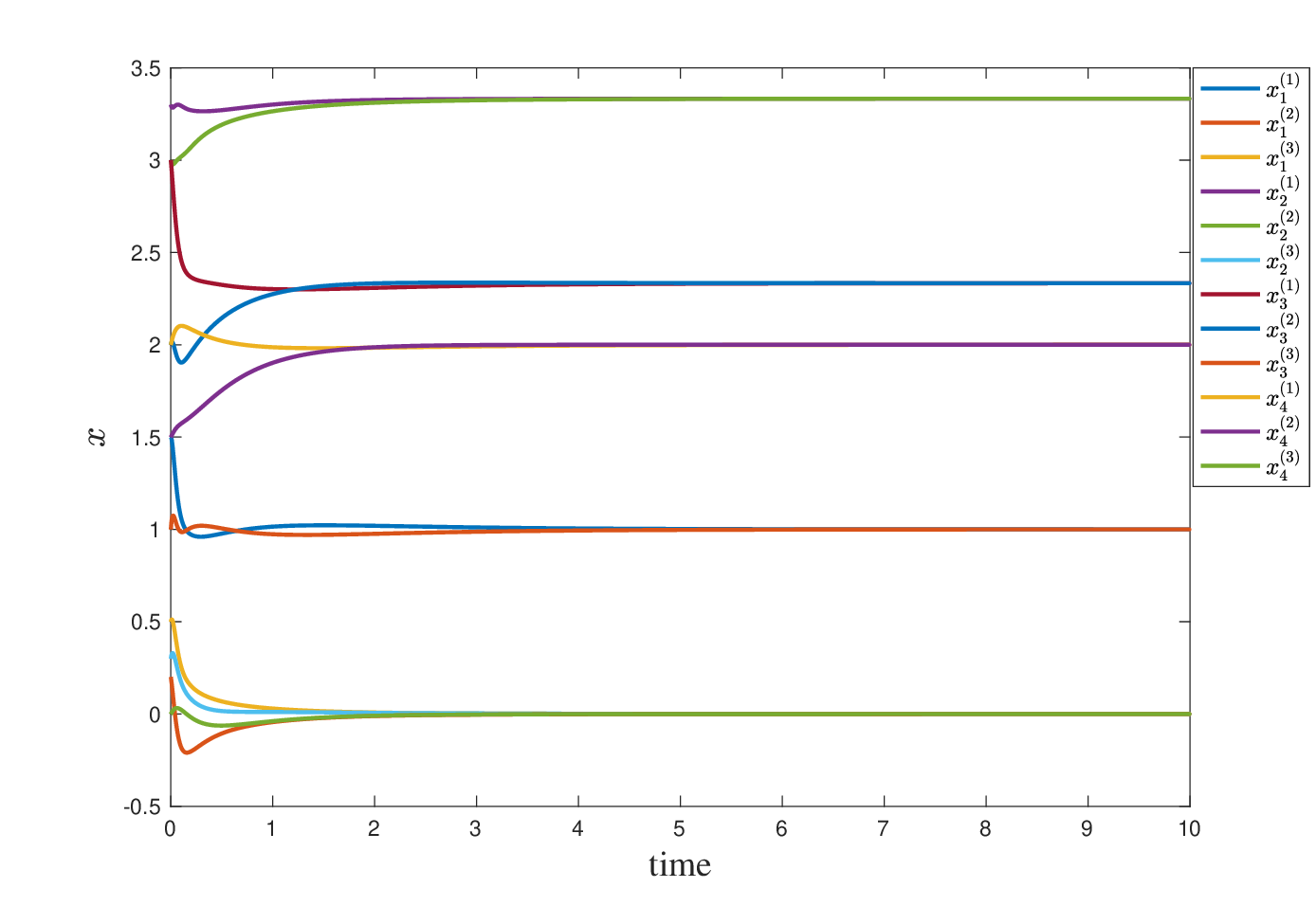}
    \caption{Evolution of states for line formation $x_{df}^1$}
    \label{fig:1-1}
  \end{minipage}\hfill
  \begin{minipage}{0.5\linewidth}
    \centering
    \includegraphics[width=\linewidth]{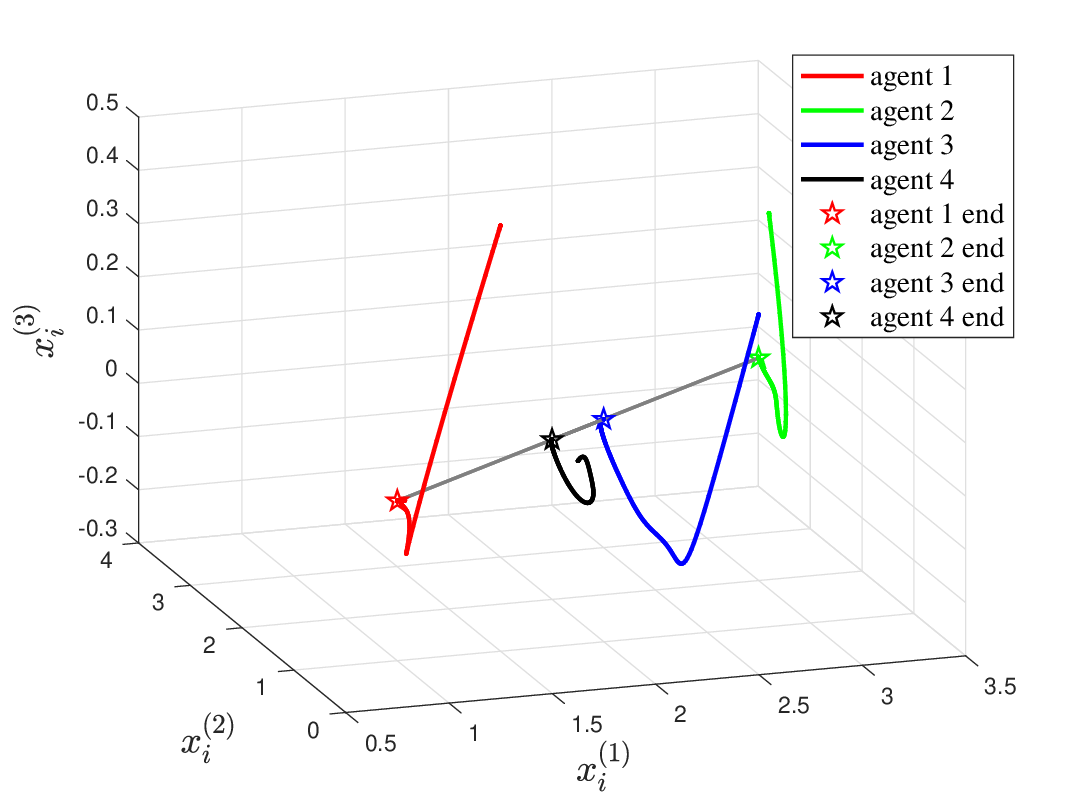}
    \caption{Trajectories of agents for line formation $x_{df}^1$}
    \label{fig:1-2}
  \end{minipage}
  \label{fig:case1}
\end{figure}

\begin{figure}[h]
  \centering
  \begin{minipage}[b]{0.49\linewidth}
    \centering
    \includegraphics[width=\linewidth]{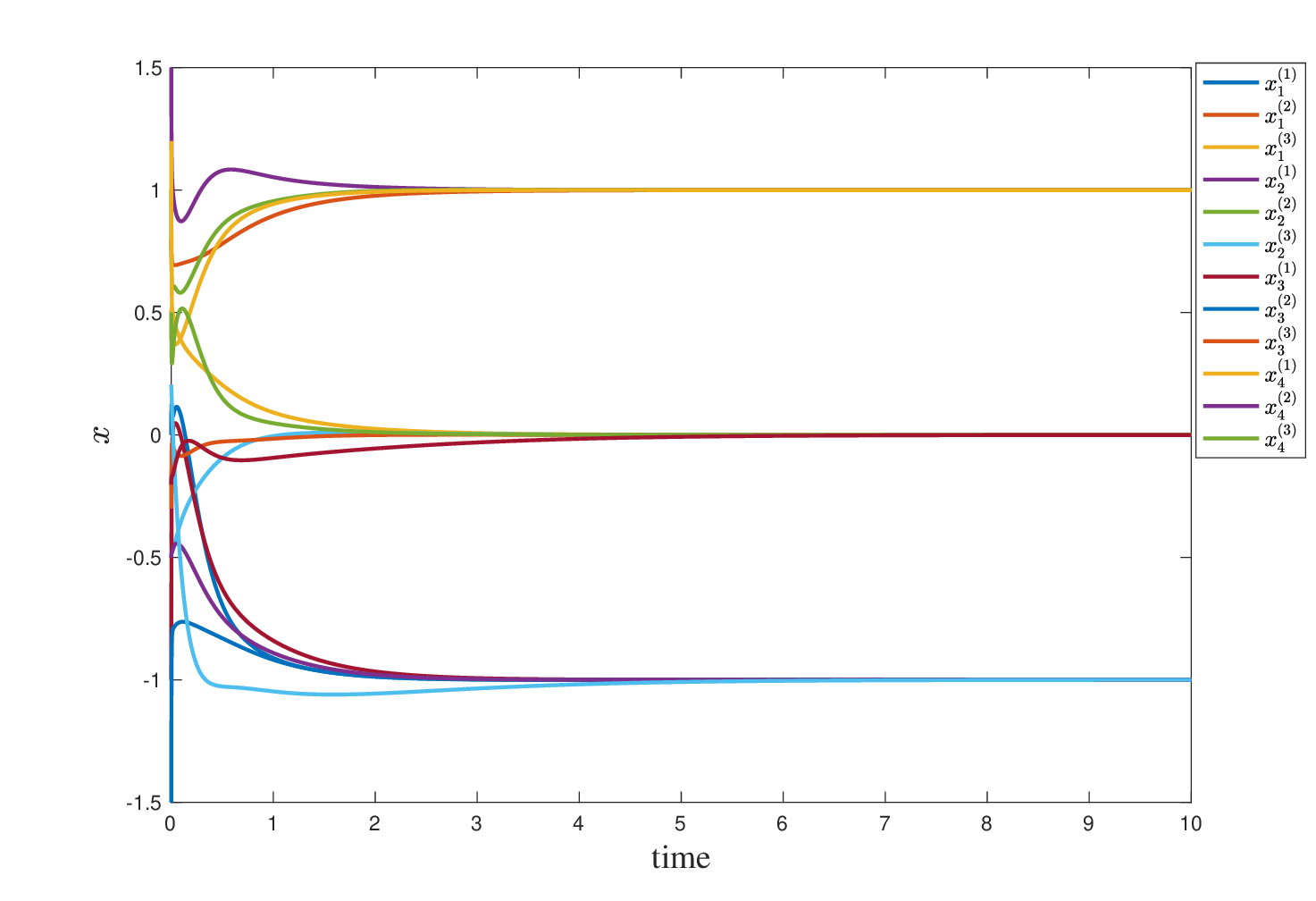}
    \captionof{figure}{Evolution of states for square formation $x_{df}^2$}
    \label{fig:2-1}
  \end{minipage}\hfill
  \begin{minipage}[b]{0.49\linewidth}
    \centering
    \includegraphics[width=\linewidth]{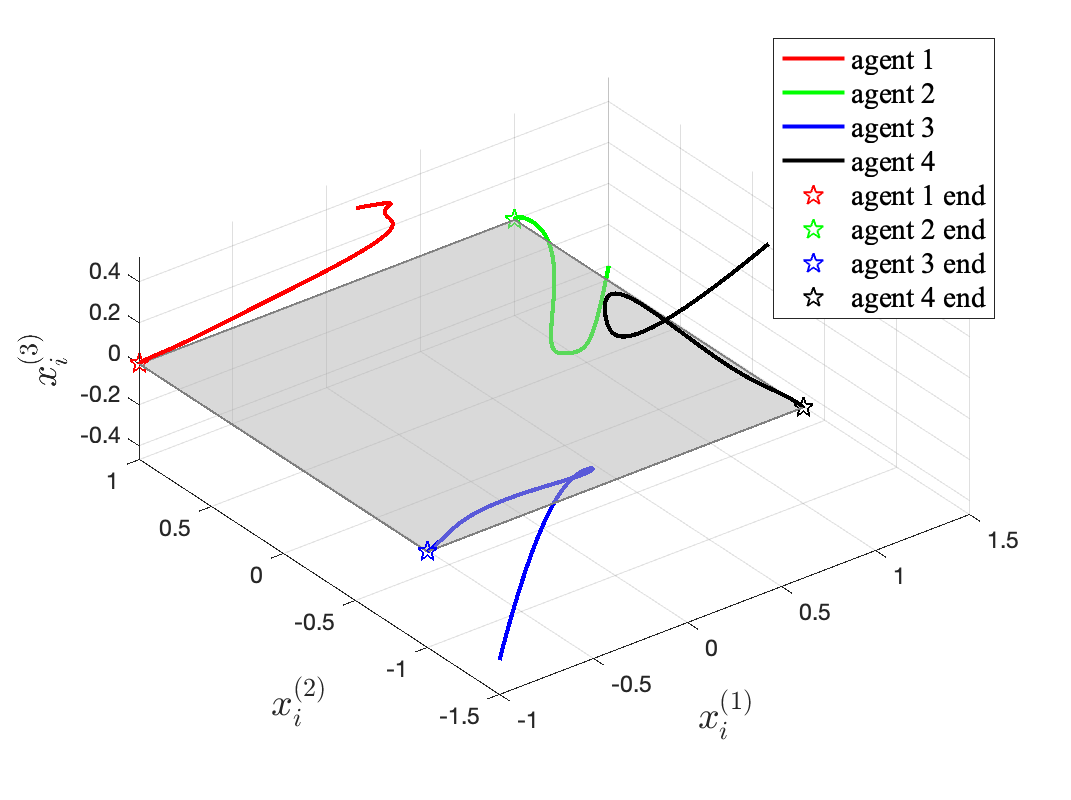}
    \captionof{figure}{Trajectories of agents for square formation $x_{df}^2$}
    \label{fig:2-2}
  \end{minipage}
  \begin{minipage}[b]{0.49\linewidth}
    \centering
    \includegraphics[width=\linewidth]{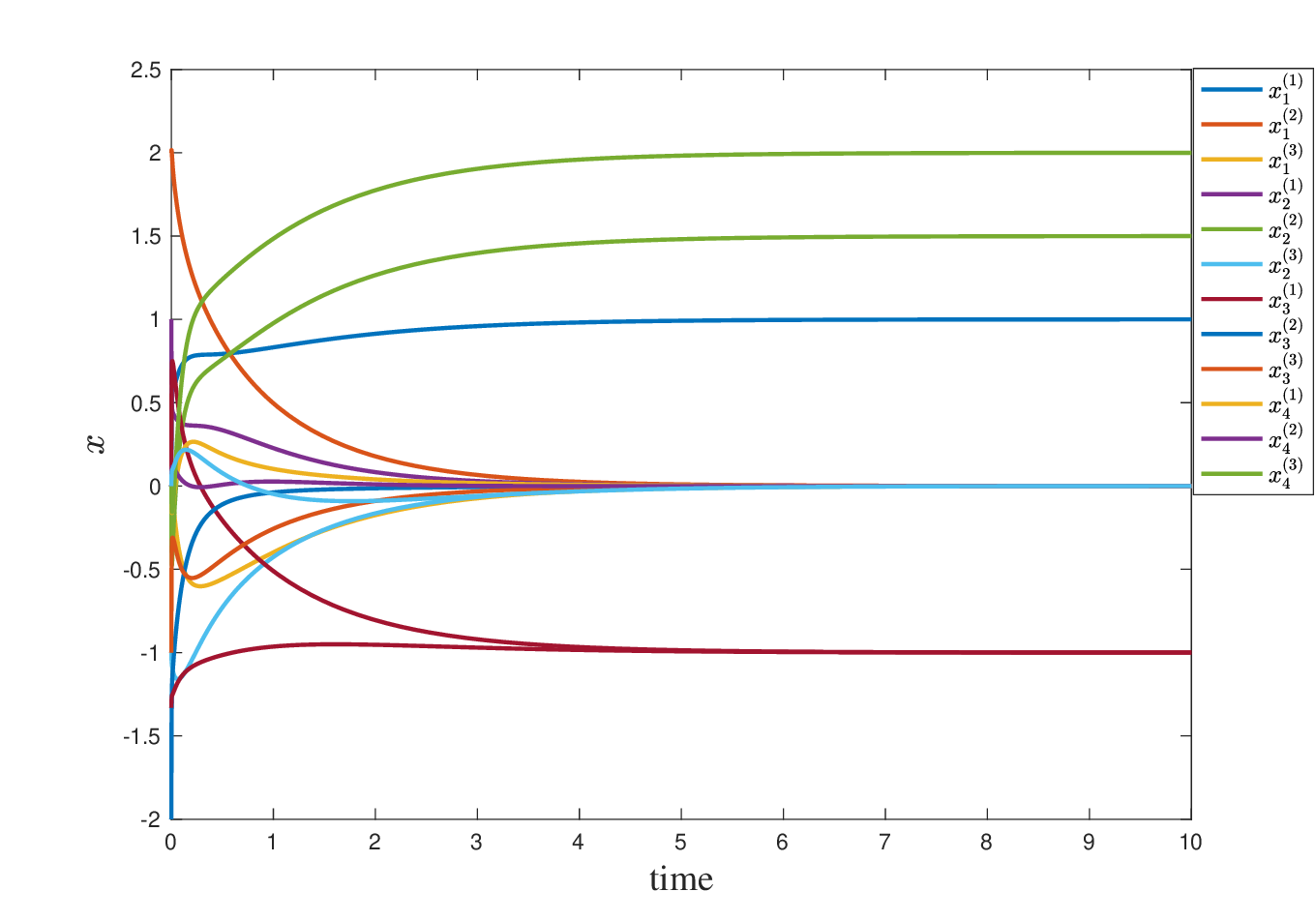}
    \captionof{figure}{Evolution of states for tetrahedron formation $x_{df}^3$}
    \label{fig:2-3}
  \end{minipage}\hfill
  \begin{minipage}[b]{0.49\linewidth}
    \centering
    \includegraphics[width=\linewidth]{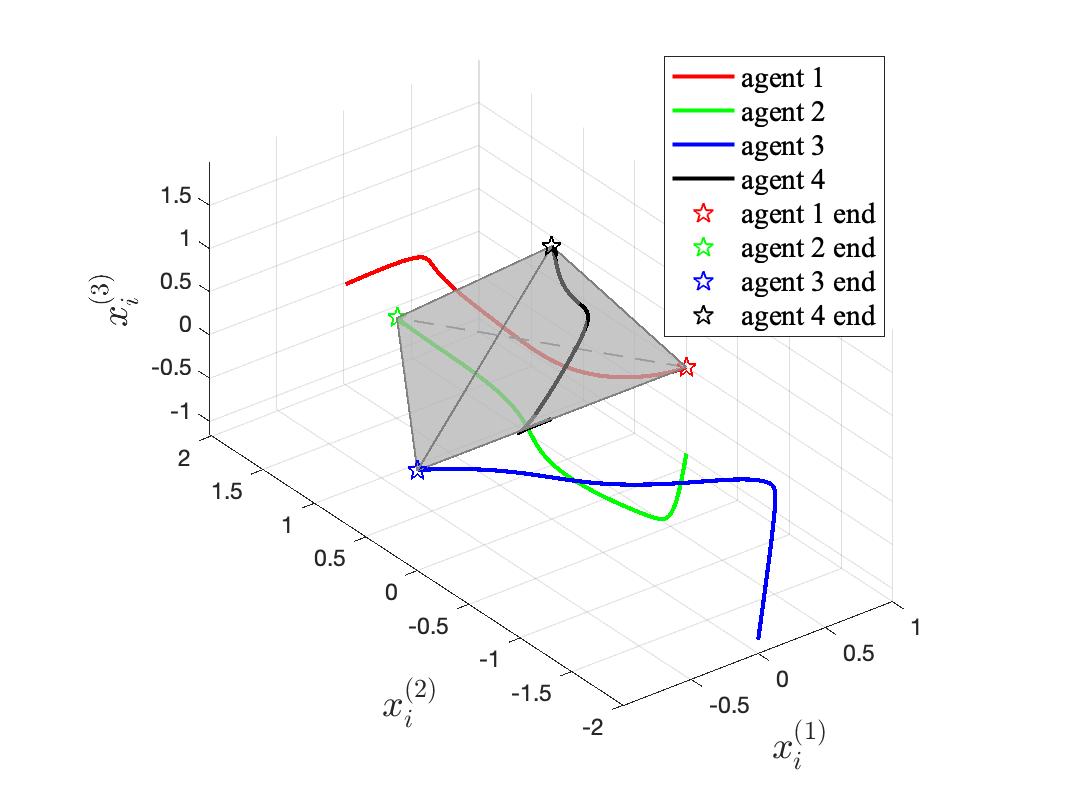}
    \captionof{figure}{Trajectories of agents for tetrahedron formation $x_{df}^3$}
    \label{fig:2-4}
  \end{minipage}
  \label{fig:case2}
\end{figure}

\textbf{Case 2} is an example where the input matrix $\hat{B}$ is not provided.
Consider a collection of two desired formations which includes the square formation $x_{df}^2$ and the regular tetrahedron formation $x_{df}^3$, where
\begin{equation*}
\begin{aligned}
x_{df}^2&=\begin{bmatrix}
-1 & 1 & 0 & 1 & 1 & 0 & -1 & -1 & 0 & 1 ~ -1 ~ 0\end{bmatrix}^\top,\\
x_{df}^3&= \begin{bmatrix}
1 & 0 & 0 & 0 & 1.5 & 0 & -1 & 0 & 0 & 0 ~ 0 ~ 2
\end{bmatrix}^\top.
\end{aligned}
\end{equation*}
We define $X^{df}= \left[x_{df}^2, x_{df}^3\right]$ and compute $\hat{B}$ according to Algorithm 1 which makes $\operatorname{Im}(X^{df})$ belong to the maximum steady-state space. Then we will design different exogenous systems to achieve $x_{df}^2$ and $x_{df}^3$ respectively. 

We firstly consider $x_{df}^2$ and choose the initial state arbitrarily as 
\begin{equation*}
\hat{x}(0)\!=\!\left[\begin{array}{llllllllllll}0&0.8&0.5&1.5&1&-0.4&-1&-1.5&-0.3&1.2&-0.5&0.5\end{array}\!\right]\!^{\top}.
\end{equation*}
Then we use Algorithm 1 to design the exogenous system, determine the performance index matrix $C$ and get the initial state of exogenous system $w(0)\approx \left[\begin{array}{llllllllllll}0.0345 & 0\end{array}\right]^{\top}.$ Fig. \ref{fig:2-1} shows that the states of the agents tend to stabilize; for sufficiently large \(t\), we have $\hat{x}\approx\left[\begin{array}{llllllllllll}-1 & 1 & 0 & 1 & 1 & 0 & -1 & -1 & 0 & 1 ~ -1 ~ 0\end{array}\!\right]^{\top}\!=x_{df}^2$. This result is consistent with the trajectories depicted in Figure~\ref{fig:2-2}, where the agents eventually form a square formation.

Similarly, we design another exogenous system to achieve the tetrahedron formation $x_{df}^3$. The initial state is set arbitrarily as $\hat{x}(0)\!=\!\left[\begin{array}{llllllllllll} 0&2&0& 1&0&-1&0&-2&-1&0&0&0\end{array}\right]^{\top}.$ Applying Algorithm~1 yields $w(0)\approx \left[\begin{array}{llllllllllll}-2.3559 & 0\end{array}\right]^{\top}\!$. Figure~\ref{fig:2-3} shows that for sufficiently large \(t\), we obtain
$\hat{x}\approx\left[\begin{array}{llllllllllll}1 & 0 & 0 & 0 & 1.5 & 0 & -1 & 0 & 0 & 0 ~ 0 ~ 2\end{array}\right]^{\top}$ $=x_{df}^3$. This is corroborated by the trajectories in Figure~\ref{fig:2-4}, where the agents ultimately form a tetrahedron formation. 

Moreover, in Figures \ref{fig:1-2}, \ref{fig:2-2}, and \ref{fig:2-4}, the agents do not travel from their initial positions to the desired formation along the shortest path. Instead, they follow more circuitous routes, which helps to confuse potential adversaries who might attempt to intercept or seize the formation. This arises because, in the optimal control problem \eqref{eq_optimal}, the objective is to minimize the input of the exogenous system rather than that of the multi-agent system.

\begin{figure}[htbp]
  \centering
  \begin{minipage}{0.49\linewidth}
    \centering
    \includegraphics[width=\linewidth]{case2_4.eps}
    \caption{Trajectories of agents under exogenous system $\textbf{S}_1$}
    \label{fig:3-1}
  \end{minipage}\hfill
  \begin{minipage}{0.49\linewidth}
    \centering
    \includegraphics[width=\linewidth]{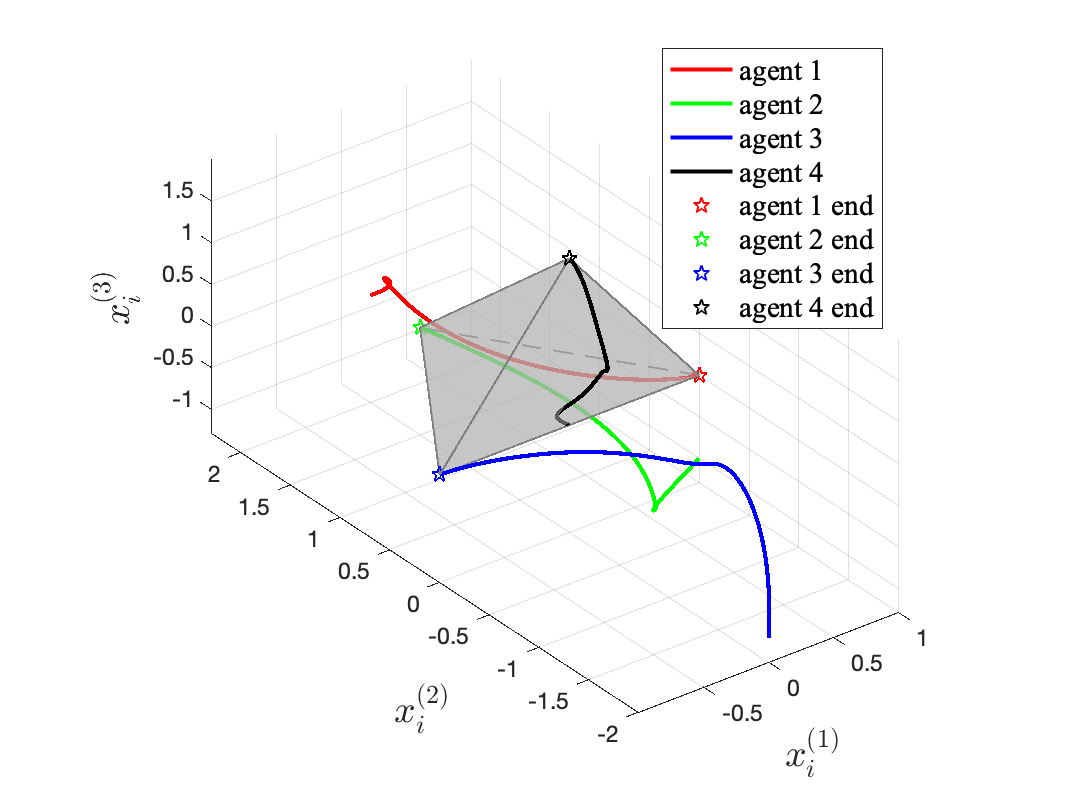}
    \caption{Trajectories of agents under exogenous system $\textbf{S}_2$}
    \label{fig:3-2}
  \end{minipage}
  \label{fig:case1}
\end{figure}
In \textbf{Case 3}, we use different exogenous systems to achieve the same formation.
In the tetrahedron formation example of Case 2, we obtained the exogenous system $\textbf{S}_1$ following Algorithm 1: 
$$H =\begin{bmatrix} 1 & 0 & 0 & 0 & 0 \\ 0 & 1 & 0 & 0 & 0 \end{bmatrix}^{\top},
\,\,
[F_1~~ F_2] =[0_{2\times 12} \; 0_{2\times 2}],$$
$$K =
\begin{bmatrix}
4 & 11 & 16 & 1 & 6 \\
1 & 4 & 2 & 8 & 12 \\
16 & 13 & 6 & 2 & 20 \\
4 & 10 & 4 & 2 & 9 \\
8 & 3 & 11 & 16 & 14
\end{bmatrix},
\,\,
G =
\begin{bmatrix}
15 & 13 & 19 & 5 & 10 \\
9 & 2 & 3 & 16 & 16
\end{bmatrix},$$
where the choices of $K\text{ and }G$ are not unique as long as they ensure that the system is stabilizable.
We can also achieve the same formation using a different exogenous system $\textbf{S}_2$:
$$H =\begin{bmatrix} 1.1 & -0.8 & 0 & 0 & 0 \\ 0.17 & 0.71 & 0 & 0 & 0 \end{bmatrix}^{\top},
\,\,
[F_1~~ F_2] =[0_{2\times 12} \; 0_{2\times 2}],$$
\small
$$\setlength{\arraycolsep}{1pt} 
K =
\begin{bmatrix}
-18 & -14 & -10 & -13 & -6 \\
  6 &   2 &  14 &   3 & -7 \\
 -3 &  18 &  18 & -13 & -2 \\
 17 &   7 &  -5 &  -9 & -11 \\
 -3 &  -4 & -18 &  -2 & -19
\end{bmatrix},
G =\begin{bmatrix}
  6 &  12 &  -7 &  10 & -16 \\
-14 &  -4 &  10 & -13 & -13
\end{bmatrix}.$$
\normalsize
Exogenous systems $\textbf{S}_1$ and $\textbf{S}_2$ employ different control strategies for system \eqref{example1} because the solutions of the Riccati equation $P$ are different. 
To achieve the same formation scaling, the initial conditions for the two exogenous systems are distinct.
Specifically, for $\textbf{S}_1$, $w(0)\approx \left[\begin{array}{llllllllllll}-2.3559 & 0\end{array}\right]^{\top}$, while for $\textbf{S}_2$, $w(0)\approx \left[\begin{array}{llllllllllll}-0.6682&0\end{array}\right]^{\top}$.
Then for sufficiently large \(t\), we obtain
$\hat{x}\approx\left[\begin{array}{llllllllllll}1 & 0 & 0 & 0 & 1.5 & 0 & -1 & 0 & 0 & 0 ~ 0 ~ 2\end{array}\right]^{\top}$ $=x_{df}^3$ under the control generated by the exogenous system $\textbf{S}_2$, which is the same as the tetrahedron formation in Case 2 with exogenous system $\textbf{S}_1$. 
Fig. \ref{fig:3-1} and Fig. \ref{fig:3-2} show the trajectories of the agents under exogenous system $\textbf{S}_1$ and $\textbf{S}_2$, respectively. These results demonstrate that by employing different control strategies, the agents follow distinct paths to reach the same formation.

\section{Conclusion}
In this paper, we study the intrinsic formation problem of multi-agent systems by introducing an exogenous system within an infinite time-horizon linear quadratic optimal control framework. By formulating the problem without explicitly incorporating the desired formation into the performance index or controller, we enhance the security of the target formation. For the forward problem, we identify the existence condition of a nonzero steady state and characterize the steady-state space. For the inverse problem, we design the input matrix $B$, exogenous system matrices and performance index to achieve the desired formation while minimizing the number of control and the dimension of the exogenous system. We also design the initial state of the exogenous system to achieve the desired formation scaling.
Numerical simulations validate the effectiveness of our methods. Our approach extends intrinsic control methods to the realization of almost arbitrary formations in any dimension—with the sole exception of eigenvectors of $A$ corresponding to eigenvalues with positive real parts and those associated with the zero eigenvalue having Jordan blocks of size at least 2—thereby overcoming limitations of previous studies confined to limited patterns.

In the future, we plan to extend our framework to more complex agent dynamics and incorporate uncertainties to enhance robustness, including exploration of nonlinear behaviors and higher-order systems. Besides, we can also adopt distributed control strategies which represent a promising approach to improve scalability and efficiency, especially in large-scale multi-agent networks. Finally, applying differential game methods could broaden the applicability of our approach, enabling the design of control laws in competitive environments.

\Acknowledgements{This work was supported in part by the National Natural Science Foundation of China with No. 62373245, in part by the National Key  Research and Development Program of China with No. 2023YFB4706800, and in part by the Dawn Program of Shanghai Education Commission, China.}

%
%

\begin{thebibliography}{99}
   
\bibitem{zhang2025linear} Zhang X, Yang Q, Xiao F, et al. Linear formation control of multi-agent systems. Automatica, 2025, 171: 111935

\bibitem{li2022differential} Li Y, Hu X. A differential game approach to intrinsic formation control. Automatica, 2022, 136: 110077

\bibitem{zhao2019bearing} Zhao S, Li Z, Ding Z. Bearing-only formation tracking control of multiagent systems. IEEE Transactions on Automatic Control, 2019, 64(11): 4541--4554

\bibitem{vu2023distance} Vu H M, Trinh M H, Van Tran Q, et al. Distance-based formation tracking of single-and double-integrator agents. IEEE Transactions on Automatic Control, 2023, 69(2): 1332-1339

\bibitem{dong2016time} Dong X, Hu G. Time-varying formation control for general linear multi-agent systems with switching directed topologies. Automatica, 2016, 73: 47-55

\bibitem{roza2019smooth} Roza A, Maggiore M, Scardovi L. A smooth distributed feedback for formation control of unicycles. IEEE Transactions on automatic control, 2019, 64(12): 4998-5011

\bibitem{zhou2020formation} Zhou P, Chen B M. Formation-containment control of Euler--Lagrange systems of leaders with bounded unknown inputs. IEEE Transactions on Cybernetics, 2020, 52(7): 6342-6353

\bibitem{oh2015survey} Oh K K, Park M C, Ahn H S. A survey of multi-agent formation control. Automatica, 2015, 53: 424-440

\bibitem{wang2012integrated} Wang J, Xin M. Integrated optimal formation control of multiple unmanned aerial vehicles. IEEE Transactions on Control Systems Technology, 2012, 21(5): 1731-1744

\bibitem{chang2022optimal} Chang L, Gao S, Wang Z. Optimal control of pattern formations for an SIR reaction--diffusion epidemic model. Journal of Theoretical Biology, 2022, 536: 111003

\bibitem{liu2023formation} Liu M, Li Y, Zhu L, et al. Formation Control for Mixed-Order UAVs--USVs--UUVs Systems under Cooperative and Optimal Control. Journal of Marine Science and Engineering, 2023, 11(4): 704

\bibitem{coogan2012scaling} Coogan S, Arcak M. Scaling the size of a formation using relative position feedback. Automatica, 2012, 48(10): 2677-2685

\bibitem{song2017intrinsic} Song W, Markdahl J, Zhang S, et al. Intrinsic reduced attitude formation with ring inter-agent graph. Automatica, 2017, 85: 193-201

\bibitem{zhang2018intrinsic} Zhang S, Song W, He F, et al. Intrinsic tetrahedron formation of reduced attitude. Automatica, 2018, 87: 375-382

\bibitem{zhang2020intrinsic} Zhang S, He F, Hong Y, et al. An intrinsic approach to formation control of regular polyhedra for reduced attitudes. Automatica, 2020, 111: 108619

\bibitem{li2024Pattern} Li T, Li Y, Liu Z, et al. Pattern formation using an intrinsic optimal control approach. Preprint to Automatica, 2024.

\bibitem{yang2021attacks} Yang Y, Xiao Y, Li T. Attacks on formation control for multiagent systems. IEEE Transactions on Cybernetics, 2021, 52(12): 12805-12817

\bibitem{kuvcera1973review} Kučera V. A review of the matrix Riccati equation. Kybernetika, 1973, 9(1): 42-61

\bibitem{Trentelman2001} Trentelman H L, Stoorvogel A A, Hautus M. Control Theory for Linear Systems. Springer London, 2001.

\bibitem{chen1984linear} Chen C T. Linear system theory and design. Saunders college publishing, 1984.

\end{thebibliography}
%


	
\end{document}